\newtheorem{theorem}{Theorem}
\newtheorem{conjecture}{Conjecture}
\newtheorem{lemma}{Lemma}
\newtheorem{ttheorem}{Theorem} 
\newcounter{step}
\newcommand{\step}[1]{\refstepcounter{step}\textbf{Step \thestep.} \emph{#1.}\ }
\newtheorem{proposition}{Proposition}
\theoremstyle{definition}
\newtheorem{definition}{Definition}
\newtheorem{remark}{Remark}
\newcommand{\wt}{\widetilde}
\newcommand{\eps}{\varepsilon}
\newcommand{\rarc}[2]{[#1,#2 \rangle}
\newcommand{\larc}[2]{\langle #1,#2]}
\newcommand{\rPeab}[2]{P_\eps^{\rarc{#1}{#2}}}
\newcommand{\lPeab}[2]{P_\eps^{\larc{#2}{#1}}}
\newcommand{\Dep}{D_\eps^+}
\newcommand{\Dem}{D_\eps^-}
\newcommand{\Depm}{D_\eps^\pm}
\newcommand{\padi}[2]{\frac{\partial #1}{\partial #2}}
\newcommand{\xgc}{x^{gc}}
\title{Duck factory on the two-torus: multiple canard cycles without geometric constraints}
\author{Ilya Schurov\thanks{Higher School of Economics}\ \thanks{The present paper uses results obtained under the support of the project No 12-01-0227 ``Dynamics of Josephson junction and slow-fast systems on the two-torus'' within the program ``The HSE scientific foundation'' in 2013/2014}\ \ and Nikita Solodovnikov\footnotemark[1]\ \thanks{Supported by project MK-7567.2013.1}\ \thanks{The article was prepared within the framework of the Academic Fund Program at the National Research University Higher School of Economics (HSE) in 2016-2017 (grant No 16-05-0066) and supported within the framework of a subsidy granted to the HSE by the Government of the Russian Federation for the implementation of the Global Competitiveness Program.}}
\date{}
\begin{document}
\maketitle
\let\subsectionautorefname\sectionautorefname
\begin{abstract}
Slow-fast systems on the two-torus are studied. As it was shown before, canard cycles are generic in such systems, which is in drastic contrast with the planar case. It is known that if the rotation number of the Poincar\'e map is integer and the slow curve is connected, the number of canard limit cycles is bounded from above by the number of fold points of the slow curve. In the present paper it is proved that there are no such geometric constraints for non-integer rotation numbers: it is possible to construct a generic system with ``as simple as possible'' slow curve and arbitrary many limit cycles.
\end{abstract}
\sloppy
\section{Introduction}
Consider a generic slow-fast system on the two-dimensional torus
\begin{equation}\label{eq:slow_fast_system}
    \left\{
    \begin{aligned}
        \dot{x}&=f(x,y,\eps)\\
        \dot{y}&=\eps g(x,y,\eps)\\
    \end{aligned}
    \right. 
    \hspace{40pt}
    (x,y)\in\mathbb{T}^2\cong \mathbb{R}^2/(2\pi\mathbb{Z}^2),\quad \eps\in(\mathbb R_+,0)
\end{equation}
Assume that $f$ and $g$ are smooth enough and $g>0$. The dynamics of this system is guided by the \emph{slow curve}:
$$M=\{(x,y)\mid f(x,y,0)=0\}.$$
It consists of equilibrium points of the fast motion (i.e. the motion determined by system~\eqref{eq:slow_fast_system} for $\eps=0$). Particularly, one can consider two parts of the slow curve: one is stable (consists of attracting hyperbolic equilibrium points) and the other is unstable (consists of repelling hyperbolic equlibrium points). On the plane $\mathbb R^2$, there is rather simple description of the generic trajectory of~\eqref{eq:slow_fast_system}: it consists of interchanging phases of the slow motion along the stable parts of the slow curve and the fast jumps along the straight lines $y=const$ near the folds of the slow curve~\cite{MR}. On the two-torus, more complicated behaviour can be locally generic.

\begin{definition}
    A solution (or trajectory) is called \emph{canard} if it contains an arc of length bounded away from zero uniformly in $\eps$ that keeps close to the unstable part of the slow curve and simultaniously contains an arc (also of length bounded away from zero uniformly in $\eps$) that keeps close to the stable part of the slow curve.
\end{definition}
This definition is a bit informal, more rigorous one will be given in \autoref{section:mainResults} (see \autoref{def:canard}). Canards are not generic on the plane: one have to introduce an additional parameter to get an attracting canard cycle. (See e.g.~\cite{KS}.) However, they are generic on the two-torus, as was conjectured in~\cite{GI} and proved in~\cite{IS1}. 

Let us explain this phenomena briefly. Assume that there exists a global cross-section $\Gamma=\{y=const\}$ transversal to the field. Then one can define the Poincar\'e map $P_\eps\colon\Gamma\to\Gamma$. It is a diffeomorphism of a circle. 
The rotation number $\rho(\eps)$ of the map $P_\eps$ 
continuously depends on $\eps$. 
For generic system \eqref{eq:slow_fast_system} function $\rho(\eps)$ is a Cantor function (also known as devil's staircase) whose horizontal steps occur at rational values and (in the general case) correspond to the existence of hyperbolic periodic points of the map $P_\eps$. These in turn correspond to limit cycles of the original vector field. More precisely, if the Poincar\'e map has a rotation number with a denominator $n$ then the initial vector field has a limit cycle which makes $n$ full passes along the slow direction of the torus $y$. In particular, fixed points of the Poincar\'e map correspond to limit cycles which make only one pass along the slow direction of the torus. 

While hyperbolic limit cycles present, the rotation number is preserved under small perturbations. So when the rotation number increases, the limit cycles have to bifurcate through saddle-node (parabolic) bifurcation. Near the critical value of the parameter, the derivative of the Poincare map for both colliding cycles has to be close to 1. This is possible only if the cycles spend comparable time near the stable and the unstable parts of the slow curve, and thus they are canards.

The next natural question is to provide an estimate for the number of canard cycles that can born in a generic slow-fast system on the two-torus. The answer to this question for the case of integer rotation number and a rather wide class of systems was given in~\cite{IS2}.

\begin{theorem}\label{thm:old}
For generic slow-fast system on the two-torus with contractible nondegenerate connected slow curve the number of limit cycles that make one pass along the axis of the slow motion is bounded by the number of fold points of the slow curve. This estimate is sharp in some open set in the space of slow-fast systems on the two-torus.
\end{theorem}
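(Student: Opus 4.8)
\noindent\emph{Sketch of a proof of Theorem~\ref{thm:old}.}
The plan is to pass to the Poincar\'e map. Since $M$ is connected and contractible it lifts to a bounded closed curve in $\mathbb R^2$, hence lies in a strip $\{a<y<b\}$ with $b-a<2\pi$; fix a global cross-section $\Gamma=\{y=y_0\}$ disjoint from $M$ and let $P_\eps\colon\Gamma\to\Gamma$ be the return map. Limit cycles making one pass along the slow direction are precisely the fixed points of $P_\eps$, so it suffices to bound $\#\operatorname{Fix}(P_\eps)$ for a generic system and all sufficiently small $\eps$. First I would record the singular-limit skeleton of an orbit: outside $M$ it follows the fast equation while $y$ creeps upward at rate $O(\eps)$; it is super-exponentially attracted to the stable part $M^{s}$; along $M^{s}$ it follows the reduced slow flow, which (since $g>0$) increases $y$ and leaves each stable arc through its upper fold with a horizontal jump; and, exceptionally, an orbit aimed with exponential accuracy at a lower fold may instead shadow the unstable arc of $M$ issuing from that fold over a length bounded away from zero before peeling off --- such an orbit is a canard.

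The main analytic tool is the entry--exit (slow divergence) estimate. If $p,q\in M$ lie on one arc and are joined by the reduced slow flow, then along the true orbit the logarithm of the linearised transverse contraction equals $\tfrac1\eps J(p,q)+O(1)$, with $J(p,q)=\int_{p}^{q}\dfrac{\partial_{x}f(x,y,0)}{g(x,y,0)}\,dy$ computed along $M$; here $J<0$ on stable arcs, $J>0$ on unstable arcs, and --- the fact I would lean on hardest --- \emph{on each individual arc $\partial_{x}f$ has constant sign, so $q\mapsto J(p,q)$ is strictly monotone}. Using the nondegenerate fold normal form (here nondegeneracy of $M$ enters) and the connectedness of $M$ to keep the combinatorics finite, I would partition $\Gamma$ into the finitely many possible orbit itineraries --- which folds are visited, which unstable arcs are shadowed --- and on each piece write $P_\eps$ as a composition of super-contracting ``regular'' blocks (capture by $M^{s}$, fast jumps, passage through the $M$-free strip) and finitely many ``canard'' blocks, one per visited lower fold, in which the peel-off point along the corresponding unstable arc depends monotonically on the entry coordinate; to leading exponential order $P_\eps'=\exp\bigl(\tfrac1\eps\sum J\bigr)$ over the itinerary.

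Counting the zeros of the displacement function $d_\eps=P_\eps-\operatorname{id}$ is the core step. On a regular block $d_\eps$ is strictly monotone, hence carries at most one fixed point. On a canard block, after rescaling the exponentially narrow window, $d_\eps$ is governed at leading order --- and this I would have to justify in $C^{2}$, not merely $C^{0}$ --- by the single-variable profile ``$q\mapsto J(\mathrm{entry},q)$ along the relevant unstable arc, composed with the recapture map''; monotonicity of $J(\cdot,q)$ pins down a unique balanced peel-off point near which the fixed points of the block cluster, and a Chebyshev/transversality estimate bounds their number. Adding up over itineraries, and checking that the rescaled local models glue faithfully onto the global map, yields $\#\operatorname{Fix}(P_\eps)\le\#\{\text{folds of }M\}$. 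For generic $(f,g)$ one also verifies that, for $\eps$ outside a codimension-one set, all these fixed points are hyperbolic, so none is missed; new cycles can appear only at saddle-node values of $\eps$, already allowed for by the count.

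I expect the main obstacle to be exactly this quantitative glueing near the canard windows: turning the singular-limit picture into $C^{k}$ estimates for $P_\eps$ on windows of width $\sim e^{-c/\eps}$, uniform in $\eps$, so that the reduced displacement profile is accurate enough that no zero is lost or spuriously created, and handling orbits that shadow several unstable arcs in succession. For the sharpness claim I would construct an explicit slow curve carrying a prescribed number $2k$ of nondegenerate folds together with explicit $f$ and $g$ for which every lower fold carries a balanced canard realised by a genuine hyperbolic limit cycle and every fast transition contributes one further hyperbolic cycle, so that equality holds; robustness of hyperbolic limit cycles under $C^{1}$-small perturbations then provides the asserted open set in the space of slow-fast systems on $\mathbb T^{2}$.
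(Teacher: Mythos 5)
First, a point of order: the paper does not prove \autoref{thm:old} at all --- it is imported verbatim from~\cite{IS2} as background for the new result (\autoref{thm:main}), so there is no in-paper proof to compare yours against. What can be compared is your sketch versus the machinery the paper actually develops and the known proof in~\cite{GI,IS2}. On that score your toolbox is the right one: reduce to fixed points of the Poincar\'e map $P_\eps$ on a cross-section disjoint from $M$, control $\log P_\eps'$ by the entry--exit integral (this is exactly the function $\Phi(U)=\int_U f'_x\,dy$ of \autoref{thm:P'est} and \autoref{thm:P'est-full}, with the $O(\eps^\nu)$ error for trajectories continued through a jump point), exploit monotonicity of $\Phi$ coming from the nondegeneracy condition \eqref{eq-nondeg-nondeg}, and split $\Gamma$ into strongly contracting ``regular'' pieces and exponentially narrow ``canard'' windows attached to the forward jump points.

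The genuine gap is that the one step carrying the content of the theorem --- that the total count is bounded by the \emph{number of fold points} --- is asserted rather than derived. You need two things you do not supply: (i) a bijection-with-multiplicity between canard windows and fold points (for a connected contractible curve with $2k$ folds, why are there at most $k$ canard windows, and why does the complement consist of at most $k$ regular arcs, each carrying at most one fixed point?); and (ii) a bound of \emph{one} fixed point of $d_\eps=P_\eps-\mathrm{id}$ per canard window. Point (ii) cannot follow from the $C^0$ entry--exit asymptotics or from monotonicity of $P_\eps$ (which, being a circle diffeomorphism, is automatically monotone while $d_\eps$ may still oscillate); it requires showing that $\log P_\eps'$, i.e.\ $\eps^{-1}(\Phi^{-}+\Phi^{+})$ along the itinerary, is itself strictly monotone in the initial condition across the window, so that $d_\eps'$ changes sign at most once there --- this is the derivative control that in the present paper is the subject of the entire \autoref{ssec:Q-deriv} (\autoref{lemma:first_derivative}), and you correctly flag it as ``the main obstacle'' but leave it unresolved; a generic ``Chebyshev/transversality estimate'' is not available off the shelf on windows of width $e^{-c/\eps}$. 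Finally, the sharpness claim is only a declaration of intent: to realize equality you must exhibit, as \autoref{lemma:lowerbound} and \autoref{lemma:everFailed} do in the half-integer setting, explicit open inequalities on the integrals $\Phi^{\pm}$ forcing one hyperbolic cycle per fold, and verify they are simultaneously satisfiable for $\eps$ in the intervals $R_n$ where the grand canard exists.
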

In the present paper we consider the case of non-integer rotation number that is not covered by \autoref{thm:old}. We also conjecture that our arguments can be applied to systems with unconnected slow curves (see \autoref{sec:conjectures}). The latter case is of special interest because slow-fast systems with unconnected slow curve appear naturally in physical applications, e.g. in the modelling of circuits with Josephson junction~\cite{KRS}.

Our main result states that in contrast with~\autoref{thm:old} for non-integer rotation number there are \emph{no geometric constraints} on the number of (canard) limit cycles.
Particularly, for any desired odd number of limit cycles $l$ we construct open set in the space of the slow-fast systems on the two-torus with \emph{convex} slow curve (i.e. having only two fold points) with exactly $l$ canard cycles that make two passes along the axis of the slow motion. (The corresponding Poincar\'e map has half-integer rotation number.) See \autoref{thm:main}.

\section{Main results}\label{section:mainResults}
In this section we state our main results. We are interested only in the phase curves of system~\eqref{eq:slow_fast_system}, so one can divide it by $g$ and consider without loss of generality case of $g\equiv 1$.

\begin{definition}
    A slow curve $M$ is called \emph{simple} if it is smooth and connected,  its lift to the covering coordinate plane is contained
		in the interior of the fundamental square $\{|x|<\pi,\ |y|<\pi\}$ and is
		convex. 
\end{definition}
We only consider simple slow curves. This, in particular, implies that there are
two jump points (forward and backward jumps), which are the far right and the
far left points of $M$ (see \autoref{fig:general-view}; here and below we 
assume that the fast coordinate $x$ is vertical and the slow coordinate $y$ is
horizontal). We denote them by $G^-$ and $G^+$ respectively.  We assume without
loss of generality that $G^\pm=(0,\mp 1)$. (Here and below every equation with
$\pm$'s and $\mp$'s corresponds to a couple of equations: with all top and all
bottom signs.) This can be achieved by an appropriate change of coordinates
respecting fibration $\{y=const\}$. 

\begin{definition}\label{def:nondeg}
    A simple slow curve $M$ is called \emph{nondegenerate} if and only if
    \begin{enumerate}
\item The following nondegenericity assumption holds in every point $(x,y)\in
	M\setminus\{G^+,G^-\}$:
	\begin{equation}\label{eq-nondeg-nondeg}
		\padi{f(x,y,0)}{x}\ne 0.
	\end{equation}

	\item \label{enum-cond-last} The following nondegenericity assumptions hold
		in the jump points: 
		\begin{equation}\label{eq-nondeg-main}
			\left.\padi{^2 f(x,y,0)}{x^2}\right|_{G^\pm} \ne 0,\quad
			\left.\padi{f(x,y,0)}{y}\right|_{G^\pm} \ne 0
		\end{equation}
    \item \label{enum-cond-int} Let $M^-$ and $M^+$ be the stable and the unstable parts of the slow curve
        respectively. Then
        \begin{equation} \label{eq:nondeg-int}
            \int_{M^+} f'_x(x, y, 0) dy + \int_{M^-} f'_x(x, y, 0) dy \ne 0.
        \end{equation}
\end{enumerate}
\end{definition}

\begin{definition}\label{def:canard}
    Fix some small $\delta>0$ (to be chosen later). Denote by $I_\delta$ the segment $I_\delta=[-1+\delta, 1-\delta]=:[\wt \alpha^-,\wt \alpha^+]$. Denote also the segments 
    $$\Sigma^-=\{(0,y)\mid y\in I_\delta\},$$
    and
    $$\Sigma^+=\{(\pi,y)\mid y\in I_\delta\}.$$
    Every trajectory that cross either of these two segments will be called \emph{canard}.
\end{definition}
This definition of canard differs from those used
in~\cite{GI,IS1,IS2}. Our definition is stricter than the latter one and is more
classical: the trajectory that pass almost no time near
the stable part of the slow curve is not canard according to our definition. 

Now we are ready to state the main result.

\begin{ttheorem}\label{thm:main}
For every desired odd number of limit cycles $l\in 2\mathbb N+1$ there exists an open set in the space of slow-fast systems on the two-torus with the following properties. 
\begin{enumerate}
\item The slow curve $M$ is simple and nondegenerate. 
\item For every system from this set there exists a sequence of intervals $\{R_n\}_{n=0}^\infty\subset \{\eps>0\}$, accumulating at zero, such that for every $\eps\in R_n$ there exist exactly $l$ canard limit cycles. 
\end{enumerate}
\end{ttheorem}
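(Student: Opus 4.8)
The plan is to build the system by carefully controlling the slow-relief function (the ``transition map'' measuring the drop of $y$ along a canard passage) so that the half-period Poincaré map has exactly $l$ hyperbolic fixed points for $\eps$ in a suitable sequence of windows. First I would reduce the problem to studying the half-return map $Q_\eps$ on the cross-section $\Gamma=\{y=y_0\}$: a canard cycle making two passes along the slow direction corresponds to a fixed point of $Q_\eps = P_\eps^{(2)}$ lying in the canard region, i.e. a trajectory that on each of its two global passes threads between the stable and unstable branches of $M$ through $\Sigma^-$ or $\Sigma^+$. Because $g\equiv 1$, the passage of a trajectory near the slow curve is governed by the ``entry--exit'' (Pontryagin--Rodygin) balance: a canard that enters near height $y_{\mathrm{in}}$ on the stable branch and leaves near $y_{\mathrm{out}}$ on the unstable branch satisfies, to principal order, $\int_{y_{\mathrm{in}}}^{y_{\mathrm{out}}} \big(\partial_x f(x^{st}(y),y,0) + \partial_x f(x^{un}(y),y,0)\big)\,dy = O(\eps\log\eps)$, so the exit level is a smooth function $\Phi$ of the entry level plus an $\eps$-dependent correction. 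Composing the two passes (plus the two fast jumps near $G^\pm$, which are rigid and contribute fixed shifts) expresses $Q_\eps$ as a small perturbation of the one-dimensional map $y\mapsto \Phi(\Phi(y)) + c(\eps)$, where $c(\eps)$ is a ``phase'' that sweeps through an interval as $\eps$ ranges over each $R_n$.

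Next I would \emph{design} $\Phi$. The freedom in choosing $f$ on $M\setminus\{G^\pm\}$ (subject only to the nondegeneracy of \autoref{def:nondeg}, which constrains $f$ only at the two folds) lets me prescribe the divergence integrand $\partial_x f(x^{st}(y),y,0)+\partial_x f(x^{un}(y),y,0)$ as essentially an arbitrary smooth function of $y$ on $I_\delta$ with the correct signs near the endpoints. Hence I can make the composition $\Psi:=\Phi\circ\Phi$ any prescribed orientation-preserving map of a subinterval of $I_\delta$ into itself whose graph I like. Choosing $\Psi$ so that the graph of $y\mapsto\Psi(y)+c$ crosses the diagonal transversally in exactly $l$ points for a whole range of constants $c$ — e.g. $\Psi+c-\mathrm{id}$ having $l$ simple zeros, which forces $l$ odd by the boundary behaviour (the map pushes the endpoints of $I_\delta$ inward, so the number of transversal crossings is odd) — gives exactly $l$ hyperbolic fixed points of the limiting map, alternately attracting and repelling. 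Transversality makes them persist under the $O(\eps\log\eps)$ perturbation by the implicit function theorem, and hyperbolicity of the fixed point of $Q_\eps$ translates back into hyperbolicity of the corresponding limit cycle of the flow.

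Then I would handle the $\eps$-dependence. The additive phase $c(\eps)$ comes from the fast-jump reinjection and from the sub-principal terms of the slow passage; I expect $c(\eps)$ to wind monotonically (on average) as $\eps\to 0$, so there is a sequence of intervals $R_n\downarrow 0$ on which $c(\eps)$ stays inside the window of constants for which the $l$-crossing picture holds. On each such $R_n$ the $l$ fixed points persist and no extra fixed points appear (the map is a genuine $C^1$-small perturbation of the model on the relevant region, and outside the canard strip $\Sigma^\pm$ the dynamics is a standard relaxation return with no fixed points). Finally, openness: every estimate used — the entry--exit asymptotics, the transversality of the $l$ crossings, the sign conditions at the folds — is an open condition in $f$ (in a sufficiently fine $C^k$ topology), so the whole construction survives small perturbations of the system, giving the required open set.

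The main obstacle I anticipate is making the entry--exit / slow-passage analysis \emph{uniform and sharp enough} near the fold points $G^\pm$: there the stable and unstable branches merge, $\partial_x f$ degenerates, and the naive divergence integral diverges logarithmically, so one must use the local normal form at a generic fold (the conditions \eqref{eq-nondeg-main} are exactly what is needed) together with matched asymptotics to get a $C^1$ (in $y$) and $O(\eps\log\eps)$-controlled formula for $\Phi$ up to the endpoints $\wt\alpha^\pm$ of $I_\delta$. Getting this control — rather than the combinatorial choice of $\Psi$, which is soft — is where the real work lies, and it is also what pins down the admissible size of $\delta$ in \autoref{def:canard}.
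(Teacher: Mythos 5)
Your overall strategy coincides with the paper's: reduce to a one-dimensional return map on a canard cross-section, show it is a small perturbation of an $\eps$-independent model built from the divergence integrals $\Phi^\pm$ along the two branches of $M$, design those integrals so the model has exactly $l$ transversal fixed points, and note that every condition imposed is an open one. But two concrete points are wrong or missing in a way that would block the argument. First, your entry--exit relation is not the right one for this geometry: a canard entering $M^-$ at $y_{\mathrm{in}}$ traverses the stable branch from $y_{\mathrm{in}}$ to the forward fold $G^-$ (at $y=1$), wraps around the torus, and then traverses the unstable branch from the backward fold $G^+$ (at $y=-1$) up to its release point, so the correct balance is $\Phi^-\rarc{y_{\mathrm{in}}}{1}+\Phi^+\rarc{-1}{y_{\mathrm{out}}}=0$ (equation \eqref{eq:def-of-beta}), not the sum of the two branch divergences integrated over the common interval $[y_{\mathrm{in}},y_{\mathrm{out}}]$; the two differ by integrals that have no reason to vanish, so the model map you would design from your formula is not the one the flow realizes.

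Second, and more seriously, your mechanism for the intervals $R_n$ --- an additive phase $c(\eps)$ sweeping through a good window --- is not what happens, and the missing idea is the \emph{grand canard}. The limiting model $\beta\circ\beta$ is $\eps$-independent; there is no drifting constant in the approximation $Q_\eps^\pm=\beta+O(\eps^\nu)$. The intervals $R_n$ are exactly the values of $\eps$ for which the exponentially thin forward image $\Dem$ of $J^-$ and backward image $\Dep$ of $J^+$ on the global cross-section intersect, i.e.\ for which a grand canard exists. For $\eps\notin\bigcup R_n$ the map $Q_\eps$ is simply undefined on $\Sigma^-$ (no trajectory threads between the branches), not defined with an unfavourable phase; and for $\eps\in R_n$ every canard is tracked relative to the grand canard, which anchors all the asymptotics (Steps 1--3 of the proof of \autoref{prop:Q-decomp}): without it you cannot justify that the trajectory detaches from $M^+$ upward, near $\beta(y_0)$ and nowhere else. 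Relatedly, for the ``exactly $l$'' claim you assert $C^1$-smallness of the perturbation; the paper only obtains a usable derivative estimate after arranging $f'_x|_{M^\pm}$ to be \emph{constant} on the designated trapping segments (assertion 4 of \autoref{lemma:lowerbound} together with \autoref{lemma:first_derivative}), a design choice your construction would also need to make explicit, since generic $C^1$ convergence of $Q_\eps$ up to the endpoints of $I_\delta$ is precisely the hard part you defer.
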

\begin{remark}
    One can construct the desired example for any prescribed simple
    nondegenerate slow curve. Moreover, the requirement of convexity can be
    easily replaced with the less restrictive requirement: $M$ has only two fold points. This can be achieved by some smooth coordinate change preserving fibration $\{y=const\}$.
\end{remark}
\begin{remark}
No upper estimate on the number of non-canard limit cycles is given. At least
one non-canard cycle exists in our settings.
\end{remark}
The paper is organized as follows. In \autoref{sec:lc} we discuss the settings and prove the main result modulo technical Lemmas. In \autoref{sec:techn} we prove these Lemmas. In \autoref{sec:conjectures} we state a conjecture on
slow-fast systems on the two-torus with unconnected slow curves.

\section{Proof of the main result}\label{sec:lc}
In this section we provide the proof of the main result. We investigate the ``vertical'' Poincar\'e map $Q_\eps$ from the segment
$\Sigma^-$ to itself (see \autoref{def:canard}; the complete settings are given in
\autoref{section:notation}). When some special requirements satisfied, this map can be
approximated by some simple function. We provide necessary preliminary results in \autoref{section:preliminary} and describe the approximation in \autoref{ssec:heur} as Technical Lemmas~\ref{prop:Q-decomp} and \ref{prop:Q-decomp-deriv}. Then we give the proof of \autoref{thm:main} (see \autoref{section:outline}) modulo Technical Lemmas.

\subsection{Settings and notations}\label{section:notation}
Let $M$ be a slow curve. It consists of the stable ($M^-$) and unstable ($M^+$) parts and two jump points: the forward jump point~$G^-$ and the backward jump point~$G^+$, see~\autoref{fig:general-view}: 
$$M=M^+\sqcup \{G^+\}\sqcup M^-\sqcup\{G^-\}.$$
\begin{figure}[htb]
    \begin{center}
        \includegraphics[scale=0.9]{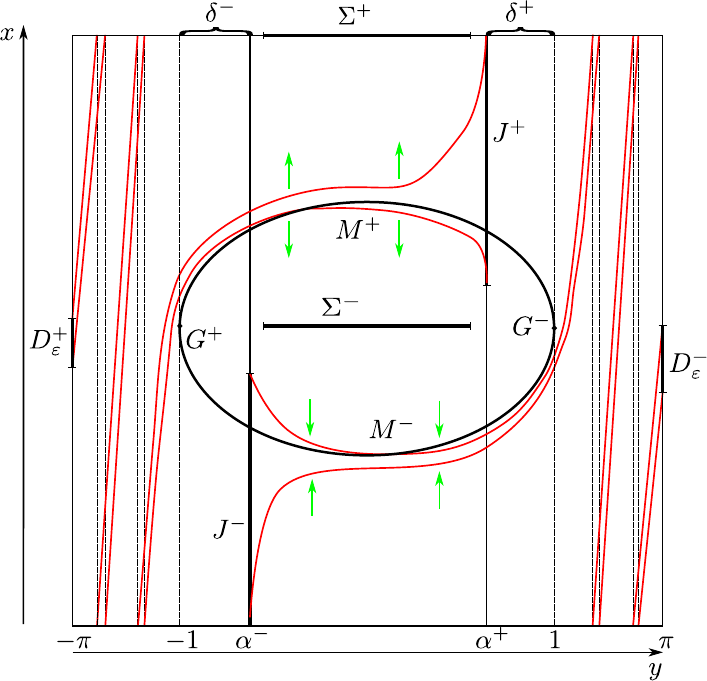}
        \caption{The slow curve and the jump points. Note that the horizontal axis is $y$ and the vertical axis is $x$}\label{fig:general-view}
    \end{center}
\end{figure}%
Recall that  $G^\pm=(0,\mp 1)$. 
We will also use the notation $M^\pm(y)$ assuming that~$M^\pm$ here are such functions that graphs $x=M^\pm(y)$ define the unstable and stable parts of the slow curve. 

Call $\Pi=S^1\times I_\delta$ a \emph{basic strip}, where $I_{\delta}$ is the same as in \autoref{def:canard}.

Fix a vertical segment $J^+$ (resp., $J^-$) that intersects $M^+$ ($M^-$) close
enough to the jump point $G^-$ ($G^+$) and does not intersect $M^-$ ($M^+$). Let 
$$y(J^\pm)=:\alpha^\pm=\pm 1\mp\delta^\pm,$$
where $\delta^\pm$ are small and $\delta^\pm<\delta$ (here $\delta$ is the same as in~\autoref{def:canard}). Note that the definition of $J^\pm$ differs from one in~\cite{GI,IS1}: instead of placing $J^+$ near $G^+$ we place it near $G^-$ and do opposite with $J^-$.

We have to reproduce the notation on oriented arcs on a circle and Poincar\'e maps from~\cite{IS1}. Consider arbitrary points~$a$ and~$b$ on the oriented circle~$S^1$. They split
the circle into two arcs. Denote the arc from point $a$ to point $b$ (in the
sense of the orientation of the circle) by~$\rarc{a}{b}$. The orientation of this
arc is induced by the orientation of the circle. Also denote the same arc with
the reversed orientation by~$\larc{a}{b}$ (see~\autoref{fig:arcs}).

\begin{figure}[htb]
    \begin{center}
        \includegraphics[scale=0.8]{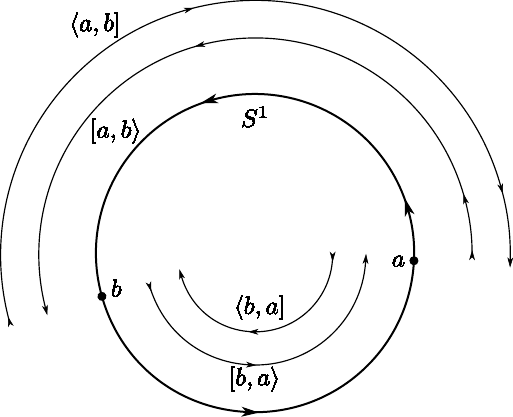}
        \caption{Orientation of the arcs}\label{fig:arcs}
    \end{center}
\end{figure}

Denote also the Poincar\'e map along the phase curves of the main system~\eqref{eq:slow_fast_system}
from the cross-section~$y=a$ to the cross-section~$y=b$ in the forward time by $\rPeab{a}{b}$. Also, let
$\lPeab{b}{a}=(\rPeab{a}{b})^{-1}$: this is the Poincar\'e map from the
cross-section $y=b$ to the cross-section $y=a$ in the backward time. This fact is
stressed by the notation: the direction of the angle bracket shows the time
direction.
\subsection{Preliminary results}\label{section:preliminary}
Denote
$$\Dep:=\lPeab{\alpha^+}{-\pi}(J^+),\quad \Dem:=\rPeab{\alpha^-}{\pi}(J^-)$$
It is proved in~\cite{GI,IS1} (see the Shape Lemmas there) that $|\Depm|=O(e^{-C/\eps})$. Note that as $\eps$ decreases to 0, $\Dep$ moves downward and $\Dem$ moves upward, making infinitely many rotations (see the Monotonicity Lemmas in~\cite{GI,IS1}) and meet each other infinitely many times. The values of $\eps$ for which $\Dep$ and $\Dem$ have nonempty intersection forms intervals $R_n$:
$$\{R_n\}_{n=0}^\infty=\{\eps>0\colon \Dep\cap\Dem\ne \varnothing\}.$$
As it was shown in~\cite{GI,IS1}, intervals $R_n$ have exponentially small length and accumulate at zero. If one pick any sequence $\eps_n\in R_n$, $n=1,2,\ldots$, then
$$\eps_n=O\left(\frac{1}{n}\right).$$
Fix some $\eps\in R_n$ and pick some point $q\in \Dep\cap\Dem$. Consider the trajectory through~$q$. In the forward time,  this trajectory makes several (about $O(1/\eps)$) rotations, then performs backward jump, follows the unstable part of the slow curve $M^+$ and finally intersects $J^+$. In the backward time, this trajectory (again after several rotations) passes near the stable part of the slow curve $M^-$ and finally intersects~$J^-$. We will call this trajectory \emph{grand canard} despite the fact that this is not a canard according to~\autoref{def:canard}.

Let $U$ be a segment of the stable or unstable part of the slow curve $M$. Consider the integral:
\begin{equation}\label{eq:Phi}
\int_U f'_{x} dy.
\end{equation}
This integral describes the expansion (if positive) or contraction (if
negative) accumulated while passing near the corresponding arc of the slow curve. Formally, the following theorem holds:
\begin{theorem}[See \cite{GI}]\label{thm:P'est}
Let $U=[A,B]\subset M^\pm$ and $X=[x_1,x_2]\times \{y(A)\}$ be a segment that contains $A$ and does not cross $M$ in points different from $A$. Then
$$
\left.\log \left(\rPeab{y(A)}{y(B)}(x)\right)'_x\right|_{X}
=
\frac{1}{\eps}\left(\int_U f'_{x} dy+O(\eps)\right).
$$
\end{theorem}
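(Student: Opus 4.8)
The plan is to use $y$ as the independent variable, express $\log\bigl(\rPeab{y(A)}{y(B)}\bigr)'_x$ as an honest integral of $f'_x/\eps$ along the trajectory, and then bound the difference between that integral and $\Phi(U)/\eps$ by the standard Tikhonov--Fenichel estimate for the deviation of the trajectory from the slow curve. First I would rewrite \eqref{eq:slow_fast_system} (with $g\equiv1$) as $dx/dy=f(x,y,\eps)/\eps$, which is legitimate since $\dot y=\eps>0$, and let $x(y;x_0)$ denote the solution with $x(y(A))=x_0$; then $\rPeab{y(A)}{y(B)}(x_0)=x(y(B);x_0)$, and $w(y):=\partial x/\partial x_0$ solves the scalar variational equation $dw/dy=\bigl(f'_x(x(y),y,\eps)/\eps\bigr)w$, $w(y(A))=1$. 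Since $w>0$, integrating gives the exact identity
$$\log\Bigl(\rPeab{y(A)}{y(B)}(x)\Bigr)'_x=\int_{y(A)}^{y(B)}\frac{f'_x\bigl(x(y;x),y,\eps\bigr)}{\eps}\,dy .$$
Subtracting $\Phi(U)=\int_{y(A)}^{y(B)}f'_x\bigl(M^\pm(y),y,0\bigr)\,dy$ (with the same orientation of the arc) reduces the claim to
$$\int_{y(A)}^{y(B)}\Bigl(f'_x\bigl(x(y),y,\eps\bigr)-f'_x\bigl(M^\pm(y),y,0\bigr)\Bigr)\,dy=O(\eps)$$
uniformly over $x\in X$. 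Splitting the integrand as $\bigl[f'_x(x,y,\eps)-f'_x(x,y,0)\bigr]+\bigl[f'_x(x,y,0)-f'_x(M^\pm(y),y,0)\bigr]$, the first bracket is $O(\eps)$ by $C^1$-smoothness in $\eps$ on the relevant compact set, and the second is $O\bigl(|x(y)-M^\pm(y)|\bigr)$ by $C^1$-smoothness in $x$. Thus the whole theorem comes down to the a priori bound $\int_{y(A)}^{y(B)}\bigl|x(y;x)-M^\pm(y)\bigr|\,dy=O(\eps)$.

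For that bound I would set $d(y)=x(y)-M^\pm(y)$ and $\lambda(y)=f'_x\bigl(M^\pm(y),y,0\bigr)$; by \eqref{eq-nondeg-nondeg} and compactness of $U\subset M^\pm$ (which stays away from the jump points) one has $\mp\lambda(y)\ge c>0$ on $U$, i.e.\ the slow curve is uniformly normally hyperbolic along $U$ — attracting along $M^-$, repelling along $M^+$. Using $f\bigl(M^\pm(y),y,0\bigr)\equiv0$ and Taylor's formula gives $dd/dy=(\lambda(y)/\eps)\,d+O(d^2)/\eps+O(1)$. Along the stable arc this is a contraction toward $d=0$: a trajectory starting on the side of $M^-$ dictated by $X$ (recall $X$ meets $M$ only at $A$, hence lies in one channel of the cross-section) is pushed toward $M^-$ at rate of order $1/\eps$, so it enters an $O(\eps)$-tube around $M^-$ after $y$-time $O(\eps)$ and then stays there; the transient contributes $O(1)\cdot O(\eps)=O(\eps)$ to the integral and the tame phase contributes $O(\eps)$ by Gronwall, so $\int|d|\,dy=O(\eps)$. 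Along the unstable arc I would not argue directly but invert: $\rPeab{y(A)}{y(B)}=\bigl(\lPeab{y(B)}{y(A)}\bigr)^{-1}$, and $\lPeab{y(B)}{y(A)}$ runs along $U$, which is attracting in backward time, so the stable estimate applies to it with $\Phi(U)$ replaced by $-\Phi(U)$ and $\log(\cdot)'$ by $-\log(\cdot)'$, giving the statement for $M^+$ as well. (Alternatively, along $M^+$ one uses that the admissible segments $X$ on which the theorem is applied are exponentially thin — they lie in the backward-flow image across $\{y=y(B)\}$ of a bounded segment, i.e.\ in $\Dep$-type strips of \cite{GI,IS1} — so $|d(y(A))|=O(e^{-C/\eps})$, and the repulsive bound $|d(y)|\le|d(y(A))|\,e^{c(y-y(A))/\eps}$ keeps $|d|$ exponentially small on the fixed interval $[y(A),y(B)]$, whence $\int|d|\,dy=O(e^{-C'/\eps})$.)

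The step I expect to be the main obstacle is precisely this a priori confinement of the trajectory to a neighborhood of $M^\pm$ for all $y\in[y(A),y(B)]$. Along $M^-$ it is nearly free from contractivity. Along $M^+$ it genuinely uses the geometry of the segments $X$ for which the theorem is stated: without a constraint forcing the initial deviation to be exponentially small, a trajectory leaves any fixed neighborhood of the repelling branch long before reaching $y(B)$ and the assertion is vacuous there. Making this precise is exactly where the Shape and Monotonicity Lemmas of \cite{GI,IS1} enter; granting confinement, the rest is a single Gronwall estimate followed by integrating a decaying (or exponentially small) function over a fixed $y$-interval.
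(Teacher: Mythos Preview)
The paper does not give its own proof of this statement; it is quoted from \cite{GI} (note the ``See \cite{GI}'' in the theorem header) and used as a preliminary result. There is therefore no proof in the present paper to compare your argument against.

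That said, your approach --- expressing $\log\bigl(\rPeab{y(A)}{y(B)}\bigr)'_x$ via the variational equation as $\eps^{-1}\int_{y(A)}^{y(B)} f'_x(x(y),y,\eps)\,dy$ and then controlling $\int|x(y)-M^\pm(y)|\,dy$ by a Tikhonov--Fenichel contraction estimate --- is the standard route, and it is essentially how such results are obtained in \cite{GI,IS1}. Your treatment of the attracting branch $M^-$ is correct: the exponential relaxation toward $M^-$ makes $\int|d|\,dy=O(\eps)$, and the rest is smoothness. Your hesitation about the repelling branch $M^+$ is also well placed. As literally transcribed here the theorem does not restrict the size of $X$, and for a fixed-size $X$ straddling $M^+$ the conclusion cannot hold at points of $X$ that are not exponentially close to $A$, since such trajectories leave any neighbourhood of $M^+$ after $y$-time $O(\eps)$. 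In practice --- both in \cite{GI,IS1} and everywhere the present paper invokes the estimate --- the $M^+$ case is used only via time reversal (i.e.\ for $\lPeab{\cdot}{\cdot}$ with the initial segment at the attracting end in backward time) or on exponentially thin segments of the $\Depm$ type, exactly the two mechanisms you identify. One small correction to your time-reversal remark: you cannot simply apply the $M^-$ argument to $\bigl(\rPeab{y(A)}{y(B)}\bigr)^{-1}$ on the \emph{image} of $X$, because that image is exponentially long; the clean formulation is to place the original segment at $y(B)$ and state the $M^+$ estimate for $\lPeab{y(B)}{y(A)}$, which is how it is effectively used downstream.
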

Moreover, similar (but a little weaker) estimate holds for trajectories extended through the jump point even after they make $O(1/\eps)$ rotations along the $x$-axis after the jump. The exact statement follows.
\begin{theorem}[See \cite{IS1,IS2}]\label{thm:P'est-full}
Let $U=[A,G^-]\subset M^-\cup\{G^-\}$, $X$ as in previous Theorem and $y_1$ is a point outside of the projection of $M$ to $y$-axis, such that there are no other points of that projection on the arc $\rarc{1}{y_1}$. Then the following holds:
$$
\left.\log \left(\rPeab{y(A)}{y_1}(x)\right)'_x\right|_{X}
=
\frac{1}{\varepsilon}\left(\int_U f'_{x} dy+O(\eps^\nu)\right),
$$
where $\nu\in (0,1/4]$.
\end{theorem}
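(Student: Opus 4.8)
\ The plan is to express $\log\bigl(\rPeab{y(A)}{y_1}\bigr)'_x$ as a line integral of $\partial_x f$ along the grand canard and to estimate that integral zone by zone; the only genuinely new difficulty compared with \autoref{thm:P'est} is the passage through $G^-$ together with the $O(1/\eps)$ fast rotations that follow it. Parametrising the orbit by the slow variable $y$ (legitimate since $\dot y=\eps>0$) one has $dx/dy=f(x,y,\eps)/\eps$, so the variational equation for $w(y):=\bigl(\rPeab{y(A)}{y}(x)\bigr)'_x$ is $w'/w=\partial_x f(x(y),y,\eps)/\eps$, whence
\[
\log\left(\rPeab{y(A)}{y_1}(x)\right)'_x=\frac1\eps\int_{y(A)}^{y_1}\partial_x f\bigl(x(y),y,\eps\bigr)\,dy .
\]
Thus it suffices to show that this integral equals $\Phi(U)+O(\eps^{\nu})$ uniformly for the initial point $x\in X$; the uniformity over $X$ follows, as in \autoref{thm:P'est}, from the exponential attraction of orbits issued from $X$ to the stable Fenichel manifold, so the real content is the numerical value.

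I would cut $[y(A),y_1]$ into four pieces: (1) $[y(A),1-h]$, with $h>0$ small and fixed; (2) $[1-h,\,1-K\eps^{2/3}]$, with $K$ a large fixed constant; (3) the fold window $[1-K\eps^{2/3},\,1+h']$, where $h'>0$ is fixed small enough that $f$ is bounded away from $0$ for all $x$ as soon as $y\ge1+h'$; and (4) $[1+h',y_1]$. On (1), \autoref{thm:P'est} gives $\int\partial_x f\,dy=\Phi^-\rarc{y(A)}{1-h}+O(\eps)$. On (2) the orbit stays within distance $O(\eps/\sqrt{1-y})$ of $M^-$ (the standard Fenichel estimate near a fold), so $\partial_x f(x(y),y,\eps)=\partial_x f(M^-(y),y,0)+O(\eps/\sqrt{1-y})+O(\eps)$; integrating, and using $\int\eps(1-y)^{-1/2}dy=O(\eps)$ on this range, $\int\partial_x f\,dy=\Phi^-\rarc{1-h}{1-K\eps^{2/3}}+O(\eps)$. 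On (4), where $f$ is bounded away from $0$, I would use the identity $\partial_x f=\eps\,\frac{d}{dy}\log|f(x(y),y,\eps)|-\eps\,\partial_y f/f$ (differentiate $\log|f|$ along the orbit and use $x'=f/\eps$): the boundary term is $O(\eps)$, and rewriting the remaining term with $x$ as variable ($dy=\eps\,dx/f$, an $x$-interval of length $O(1/\eps)$, bounded integrand) also gives $O(\eps)$; since there is no slow curve over $y>1$, piece (4) adds nothing to $\Phi$.

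The heart of the matter is piece (3). Using \eqref{eq-nondeg-main} and the normalisation $G^-=(0,1)$, write $f=ax^2+b(y-1)+(\text{higher order})$ near $G^-$ with $a,b\neq0$, and blow up by $x=\eps^{1/3}\xi$, $y-1=\eps^{2/3}\eta$: the field becomes an $O(\eps^{1/3})$-perturbation of the Riccati equation $d\xi/d\eta=a\xi^2+b\eta$, and $\int\partial_x f\,dy=\eps\int 2a\,\xi(\eta)\,d\eta$. A standard entry--exit analysis applies: at $\eta=-K$ the incoming orbit is exponentially close to the Riccati solution asymptotic to the parabola $a\xi^2+b\eta=0$ (the blow-up of the Fenichel continuation of $M^-$), it crosses $\xi=0$ near $\eta=0$ and leaves the chart — say at $|\xi|=\eps^{-\beta}$ for small $\beta>0$ — within $\eta$-time $O(1)$; the branch of that Riccati orbit asymptotic to the parabola reproduces $\Phi^-\rarc{1-K\eps^{2/3}}{1}$ after undoing the scaling, with an error controlled by the perturbation size and the chart width. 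From the chart exit up to $y=1+h'$ the orbit makes $O(1/\eps)$ turns around the circle, and on each turn $f$ dips to size $\sim b(y-1)$ as $x$ passes the fold value $0$; here I would again apply the $\log|f|$-identity turn by turn, the small denominators at these dips and the merely polynomial accuracy with which the post-fold profile $\xi(y;\eps)$ is known being the source of a real loss. Balancing the chart-exit exponent $\beta$ against that loss is what produces the admissible range $\nu\in(0,1/4]$ (surely not sharp).

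Adding up, the $\Phi$-contributions of (1), (2) and the fold part of (3) telescope to $\Phi^-\rarc{y(A)}{1}=\Phi(U)$, the errors from (1), (2), (4) are $O(\eps)$ and the error from (3) is $O(\eps^{\nu})$, which is the asserted estimate. The step I expect to be the obstacle is the post-fold rotation sub-piece of (3): away from $G^-$ everything is governed by honest hyperbolic contraction/expansion and by the Riccati normal form, but combining $O(1/\eps)$ near-fold passages with only polynomially accurate control of the orbit is exactly what forces the weaker error $O(\eps^{\nu})$ in place of the $O(\eps)$ of \autoref{thm:P'est}.
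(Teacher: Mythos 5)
The paper itself offers no proof of this statement: it is imported verbatim from \cite{IS1,IS2} (it is the ``extended'' version of the contraction/expansion estimate, proved there together with the Shape Lemma machinery), so there is no in-paper argument to compare yours against. Judged on its own, your architecture is the standard and correct one, and it matches the strategy of the cited references in outline: reduce $\log\bigl(\rPeab{y(A)}{y_1}\bigr)'_x$ to $\eps^{-1}\int f'_x(x(y),y,\eps)\,dy$ via the equation in variations, get $O(\eps)$ accuracy away from $G^-$ from Fenichel theory / \autoref{thm:P'est}, dispose of the region where $f$ is bounded away from zero with the $\log|f|$ identity, and isolate the fold passage plus the subsequent rotations as the only genuinely new piece. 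Your accounting of pieces (1), (2) and (4) is sound.

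Two remarks on piece (3), which is where the entire content of the theorem lives and where your text is a description rather than a proof. First, a presentational point: the quantity $\Phi^-\rarc{1-K\eps^{2/3}}{1}$ is itself $O(\eps)$ (since $f'_x(M^-(y),y)\sim\sqrt{1-y}$ there), so nothing in the fold window needs to be ``reproduced''; what must actually be proved is that $\int f'_x(x(y),y,\eps)\,dy$ over the blow-up chart and over the post-fold rotation zone $1<y<1+h'$ is $O(\eps^{\nu+1})$--small relative to nothing, i.e.\ is itself $O(\eps^\nu)$, uniformly in the entry point. Second, and more substantively, the entry--exit analysis in the Riccati chart, the matching of the chart-exit data with the rotation phase, and the turn-by-turn control of $\int \partial_y f/f^2\,dx$ when $\min_x|f|\sim b(y-1)$ is small are exactly the technical core of \cite{GI,IS1}; you correctly identify them as the obstacle and as the source of the degraded exponent, but you do not carry them out, and your heuristic balancing does not actually produce a value of $\nu$. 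So the proposal is a faithful road map to the known proof rather than a proof; if you intend it as the latter, the fold-window and rotation-phase estimates must be written out (or the statement simply cited, as the paper does).
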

Reverting the time, one can obtain a similar result for $M^+$ and $G^+$.

\subsection{Approximation of the Poincar\'e map}\label{ssec:heur}
Below we use the following notation:
\begin{equation}\label{eq:def-of-lambda}
\lambda^{\pm}(y) := f'_x(M^{\pm}(y),y).
\end{equation}

\begin{definition}\label{def:def-of-beta}
    For any $y\in [-1,1]$ let \emph{singular release point} (denoted by $\beta(y)$) be the unique root of the equation
\begin{equation}\label{eq:def-of-beta}
\int_{y}^{1}\lambda^{-}dy + \int_{-1}^{\beta(y)}\lambda^{+}dy =0
\end{equation}
\end{definition}
Due to nondegenericity assumptions \eqref{eq-nondeg-nondeg}  the singular
release point is well-defined for any $y$ such that $|\int_{y}^{1}\lambda^{-}dy|
< \int_{-1}^{1}\lambda^{+}dy$.

\begin{lemma}\label{prop:Q-decomp}
    Let $\eps\in R_n$ for some $n$ and therefore the grand canard exists.
Then the Poincar\'e map $Q_\eps\colon \Sigma^-\to \Sigma^-$ can be decomposed in
the following way:
$$
Q_{\varepsilon}\colon
\Sigma^-\stackrel{Q^-_\eps}{\to}\Sigma^+\stackrel{Q^+_\eps}{\to}\Sigma^-,$$
and the maps $Q^+_\eps$ and $Q^-_\eps$ both have the following asymptotics:
\begin{equation}\label{eq:QandBeta}
    Q^\pm_\eps(y)=\beta(y)+O(\eps^\nu)
\end{equation}
for some $\nu\in (0,1/4)$, $\beta\in C^2$. The Poincar\'e map $Q_\eps$ is defined at the point~$y$ for small $\eps$ if and only if $\beta(y)$ and $\beta(\beta(y))$ are
defined and contained in the interior of $I_\delta$.

\end{lemma}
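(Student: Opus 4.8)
The statement has two parts: the decomposition of $Q_\eps$ through $\Sigma^+$ with the stated asymptotics, and the characterization of the domain of $Q_\eps$. The plan is to build $Q_\eps^-$ and $Q_\eps^+$ out of the elementary Poincar\'e maps $\rPeab{a}{b}$ introduced in \autoref{section:notation}, following the itinerary of the grand canard: starting from a point $(0,y)\in\Sigma^-$, a trajectory first travels to the cross-section $y=\alpha^-$, then is carried by $\rPeab{\alpha^-}{\pi}$ (whose image is $\Dem$), makes $O(1/\eps)$ rotations after the forward jump, comes down near $M^+$ at the backward jump, follows $M^+$, and lands on $\Sigma^+$. Composing these gives $Q_\eps^-$; reversing roles of $M^\pm$ and of the two jump points (exactly the symmetry already built into the placement of $J^\pm$) gives $Q_\eps^+\colon\Sigma^+\to\Sigma^-$. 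So the first task is bookkeeping: write $Q_\eps^\pm$ as an explicit composition of maps to which \autoref{thm:P'est} and \autoref{thm:P'est-full} apply.

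The analytic heart is establishing \eqref{eq:QandBeta}. First I would locate the image point. The trajectory entering near $M^-$ at $y$ is contracted, then after the jump and the $O(1/\eps)$ rotations it is expanded while sliding along $M^+$; it leaves a neighborhood of $M^+$ (the ``singular release'') precisely when the accumulated expansion cancels the accumulated contraction, i.e. when the total of $\Phi^-$ over the stable arc from $y$ and $\Phi^+$ over the unstable arc from $-1$ to the exit point vanishes. That cancellation condition is exactly \eqref{eq:def-of-beta} defining $\beta(y)$, so the release happens at $y$-coordinate $\beta(y)+o(1)$. To upgrade $o(1)$ to $O(\eps^\nu)$ I would use \autoref{thm:P'est-full}: the logarithmic derivative of the relevant composite Poincar\'e map along the fiber is $(\Phi(U)+O(\eps^\nu))/\eps$, and the exit from the $\delta$-neighborhood of $M^+$ onto $\Sigma^+$ (which is at distance $O(1)$ from $M^+$ along the fiber) forces $\Phi(U) = O(\eps^\nu)$ on the arc $U$ ending at the exit point; inverting the relation between $\Phi$ (a nondegenerate function of the endpoint by \eqref{eq-nondeg-nondeg}) and the exit coordinate then yields $Q_\eps^-(y) = \beta(y) + O(\eps^\nu)$. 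One must also check that the initial position $y$ on $\Sigma^-$ rather than exactly on $M^-$ only perturbs the contraction budget by $O(\eps)$ or less, which again follows from \autoref{thm:P'est} applied on the short arc from $\Sigma^-$ to $M^-$; and that the $O(1/\eps)$ rotations after the jump, together with the passage through the jump point, are covered by the hypotheses of \autoref{thm:P'est-full} (this is the point of its weaker exponent $\nu\le 1/4$).

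For the domain statement: $Q_\eps = Q_\eps^+\circ Q_\eps^-$ is defined at $y$ exactly when $Q_\eps^-(y)$ lands inside $\Sigma^+$ (so that the first half-return is well defined and the trajectory actually reaches $\Sigma^+$) and then $Q_\eps^+$ of that point lands back inside $\Sigma^-$. By \eqref{eq:QandBeta} the first condition is $\beta(y)\in\mathrm{int}\,I_\delta$ and the second is $\beta(\beta(y))\in\mathrm{int}\,I_\delta$, up to the $O(\eps^\nu)$ error; since $I_\delta$ is closed and these are open conditions, for $\eps$ small the error does not affect whether the image is in the interior, giving precisely the ``if and only if'' claimed. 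I would also note that $\beta$ being defined requires the inequality $|\Phi^-\rarc{y}{1}| < \Phi^+\rarc{-1}{1}$ from \autoref{def:def-of-beta}, so ``$\beta(y)$ defined and in $\mathrm{int}\,I_\delta$'' is the honest hypothesis, matching the lemma's phrasing.

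\textbf{Main obstacle.} The routine part is the composition bookkeeping; the delicate part is controlling the trajectory through the forward jump and the ensuing $O(1/\eps)$ fast rotations with only an $O(\eps^\nu)$ loss, and in particular verifying that \autoref{thm:P'est-full} genuinely applies across the jump point with the endpoints we need (the exit point $\beta(y)$ may be anywhere in $I_\delta$, not close to a jump point). Tied to this is making sure the ``singular release'' is exponentially sharp in the right regime: the trajectory must be shown to stay in the $\delta$-tube around $M^+$ until $y$-coordinate $\beta(y)+O(\eps^\nu)$ and then to leave it and reach $\Sigma^+$ before any further structure intervenes, which is where the nondegeneracy conditions \eqref{eq-nondeg-nondeg}--\eqref{eq-nondeg-main} and the simplicity of $M$ (only two folds, convex) are used. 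I expect these transversality-and-escape estimates to be deferred to the technical lemmas of \autoref{sec:techn}, with the present lemma reduced to assembling them.
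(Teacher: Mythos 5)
Your plan for the asymptotics coincides with the paper's Step 3: once it is known that the trajectory does exit onto $\Sigma^+$ at some point $Q^-_\eps(y)$, the fact that it travels between two points at distance $O(1)$ from the slow curve forces, via the Mean Value Theorem applied to the composite Poincar\'e map and \autoref{thm:P'est-full}, the total accumulated exponent $\Phi^-\rarc{y}{1}+\Phi^+\rarc{-1}{Q^-_\eps(y)}$ to be $O(\eps^\nu)$; inverting the monotone function $\Phi^+$ then gives \eqref{eq:QandBeta}. That part of your proposal is sound and is essentially the paper's argument.

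The gap is precisely the part you label the ``main obstacle'' and then defer ``to the technical lemmas of \autoref{sec:techn}'': there is nothing there to defer to. The rigorous proof of this lemma \emph{is} the content of \autoref{sec:techn}; the only external inputs are the derivative estimates of \autoref{thm:P'est} and \autoref{thm:P'est-full}, and the localization of the release plus the fact that the trajectory actually reaches $\Sigma^+$ must be argued directly. The paper does this with a two-sided squeeze absent from your plan. First, for any fixed $\beta_1<\beta(y)$ it compares two exponentially small segments on the cross-section $y=\pm\pi$ sharing an endpoint on the grand canard: the forward image $J_0'$ of the gap between $\psi$ and the grand canard, of size $\exp\left(\left(\Phi^-\rarc{y}{1}+o(1)\right)/\eps\right)$, and the backward image $J_1'$ of a \emph{fixed} transversal to $M^+$ at $\beta_1$, of size $\exp\left(\left(\Phi^+\rarc{-1}{\beta_1}+o(1)\right)/\eps\right)$; the strict inequality $\Phi^-\rarc{y}{1}+\Phi^+\rarc{-1}{\beta_1}<0$ yields $J_0'\subset J_1'$, so $\psi$ is still inside the fixed tube around $M^+$ at $\beta_1$ — no early release. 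Second, the same comparison applied to the time-reversed system shows $\psi$ is no longer near $M^+$ at any fixed $\beta_2>\beta(y)$. Third, a trapping region bounded by $y=\beta_1$, $y=\beta_2$, the grand canard and $\Sigma^+$, together with the observation that $\psi$ lies \emph{above} the grand canard (so the release is upward, not back down toward $M^-$), forces the exit to occur through $\Sigma^+$. Without this last point in particular, your claim that the trajectory ``reaches $\Sigma^+$'' is unsupported: a downward release would miss $\Sigma^+$ entirely and $Q^-_\eps(y)$ would be undefined, so the existence half of the lemma would fail.
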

\begin{lemma}\label{prop:Q-decomp-deriv}
Under the assumptions of \autoref{prop:Q-decomp} the following expansion holds:
\begin{equation}
    \dfrac{d}{dy}Q^\pm_\eps(y)=\dfrac{d}{dy}\beta(y)+o(1).
\end{equation}
\end{lemma}
We give the heuristic proof of \autoref{prop:Q-decomp} below in this section. We give rigorous proofs of \autoref{prop:Q-decomp} and \autoref{prop:Q-decomp-deriv} in \autoref{ssec:approx} and \autoref{ssec:Q-deriv} respectively.
\begin{proof}[Heuristic proof of \autoref{prop:Q-decomp}]
    Consider the map $Q^-_\eps$ (the map $Q^+_\eps$ can be considered in the same way). Let~$p=(0,y_0)\in \Sigma^-$ and therefore $p$ is bounded away from $M$. Consider a trajectory $\psi$ through $p$. 
    In the forward time, it attracts to $M^-$ after time $O(1)$ (``falls'') moving in the negative direction (``downwards''), see~\autoref{fig:general-view}.  
    After the fall, the trajectory follows~$M^-$ exponentially close to the grand canard (being ``above'' the grand canard) until it reaches the jump point. 
    It follows from \autoref{thm:P'est-full} that after the jump the distance between the trajectory $\psi$ and the grand canard is exponentially small and its $\log$ is approximately 
$\eps^{-1}\left(\int_{y_0}^{1}\lambda^- dy\right) < 0$.  

After the jump, the trajectory follows the grand canard during the rotation phase, then performs backward jump and passes near some segment of the unstable part of the slow curve $M^+$. 
It is possible that the trajectory will be released from the grand canard  (and thus $M^+$) at some point and then attracted to $M^-$ again before leaving the basic strip. This release will be made in the positive direction (``upward''), because the trajectory $\psi$ is above the grand canard. In this case the trajectory intersects $\Sigma^+$ and therefore $Q^-_\eps(y_0)$ is defined.

We will show that the release is possible only near $\beta(y_0)$. Indeed, the release occurs at the point where the contraction rate accumulated
during the passage near the stable part of the slow curve is compensated by the
expansion accumulated near the unstable part of the slow curve. The latter is
exponentially large and its $\log$ is approximately 
$\eps^{-1}\left(\int_{-1}^{\beta(y_0)}\lambda^+ dy\right) > 0$

Relation~\eqref{eq:def-of-beta} says that $\beta(y_0)$ is the point for which the contraction is compensated by the expansion. Thus the value of the Poincar\'e map $Q^-_\eps(y_0)$ is approximately equal to $\beta(y_0)$. The error of all calculations is of order $O(\eps^\nu)$ as in~\autoref{thm:P'est-full}. 

Rigorous proof will be given in \autoref{ssec:approx}.
\end{proof}

\subsection{Proof modulo approximation Lemmas}\label{section:outline}
\autoref{prop:Q-decomp} and \autoref{prop:Q-decomp-deriv} imply that for sufficiently small $\varepsilon$ the following holds:
\begin{equation}
	Q_{\varepsilon} = \beta\circ\beta + O(\varepsilon^{\nu}).
\end{equation}
\begin{equation}
	Q_{\varepsilon}' = (\beta\circ\beta)' + o(1).
\end{equation}
Fixed points of $Q_{\varepsilon}$ correspond to $2$-periodic points of the map
$\beta$ (including fixed points).
 
Without loss of generality consider the case 
$\left|\int_{-1}^{1}\lambda^+dy\right| > \left|\int_{-1}^{1}\lambda^-dy\right|$. The opposite case can be reduced to this one by time reverse; equality is impossible due to nondegenericity assumption~\eqref{eq:nondeg-int}. Now the map $\beta$ is an orientation reversing monotonic map:
\begin{equation}
	\beta\colon[-1,1]\to[-1,\varkappa],\quad \varkappa<1.
\end{equation}
Such a map has a unique fixed point, and all the other periodic orbits have period $2$. Thus there are only periodic points of period $2$ whose number is automatically even, and one fixed point. This explains why the number of canards is odd. 

In \autoref{lemma:any-beta} below we show that in a sense ``any'' map $\beta$
can be realized by an appropriate choice of system~\eqref{eq:slow_fast_system}:
boundary conditions near the ends of the segment $[-1, 1]$ are the only
restriction imposed on $\beta$. Such conditions cannot affect the number of
periodic orbits of $\beta$ as no periodic orbits can exist near the ends of the
segment (this is due to $\varkappa < 1$). It follows immediately that one can
obtain $\beta$ with any given odd number of hyperbolic periodic orbits (one
fixed point and several 2-periodic points). These periodic orbits
correspond to hyperbolic fixed points of the square map $\beta \circ \beta$. The
Poincar\'e map $Q_{\eps}$ is $C^1$-close to $\beta\circ \beta$. Therefore for
$\eps$ small enough $Q_{\eps}$ has the same number of fixed points as
$\beta\circ\beta$. These points correspond to hyperbolic limit cycles, which
have to be canards as they intersect $\Sigma^\pm$. They are preserved under small
perturbations of the system due to hyperbolicity. This finishes the proof of Main
Result modulo \autoref{lemma:any-beta}. The rest of the section is devoted to this
Lemma.

To give a precise statement we have to introduce several functional classes.

Consider a set $\Theta$ of smooth functions $\theta\colon[-1,1]\to \mathbb{R}$ with the following properties:
\begin{enumerate}
\item $\theta(y)>0$ for any $y\in(-1,1)$. 
\item $\theta(\pm 1)=0$.
\item $\theta'(\pm 1) \ne 0$.
\end{enumerate}
Consider a set $B_{\varkappa}$ of functions $\tilde{\beta}$ with the following properties:
\begin{enumerate}
\item $\tilde{\beta}$ is smooth and strictly decreasing on $[-1,1]$.
\item $\tilde{\beta}(1)=-1$, $\tilde{\beta}'(1)\ne 0$.
\item $\tilde{\beta}(-1) = \varkappa$.
\item There exists $\theta \in \Theta$ such that for any $y$ in the small semi-neighbourhood $V=[-1, -1+\delta_1)$ the following holds:
\begin{equation}\label{eq:beta-construction}
	\tilde{\beta}'(y) = -\sqrt{\theta(y)}
\end{equation}  
\end{enumerate}

Call a pair of function $\lambda^+$ and $\lambda^-$ \emph{admissible} provided
that the following holds:

\begin{enumerate}
	\item For any $k=0,1,\ldots$
	\begin{equation}\label{eq:l-1}
		\dfrac{d^k(\lambda^+)^{-1}}{dx^k}(\pm1)=
		\dfrac{d^k(\lambda^-)^{-1}}{dx^k}(\pm1).
	\end{equation}
	\item For some $\theta^{\pm} \in \Theta$
			\begin{equation}\label{eq:l-2}
				\lambda^+=\sqrt{\theta^+},~\lambda^-=-\sqrt{\theta^-}.
			\end{equation}
\end{enumerate}
Due to the smoothness of system \eqref{eq:slow_fast_system} and the nondegenericity assumptions (see \autoref{def:nondeg})
functions $\lambda^{\pm}$ defined by equation \eqref{eq:def-of-lambda} are
admissible. They also define function $\beta$ of class $B_{\varkappa}$.

Now we go in the opposite direction: we will show that any $\beta$ of class $B_{\varkappa}$ can be realized by admissible $\lambda^+$ and $\lambda^-$. Due to Whitney Theorem any admissible $\lambda^\pm$ can be realized by some system~\eqref{eq:slow_fast_system}.

\begin{lemma}\label{lemma:any-beta}
For any $\varkappa\in(-1,1)$ and any $\beta\in B_{\varkappa}$ 
there exist admissible $\lambda^+$ and $\lambda^-$ such that $\beta$ satisfies \eqref{eq:def-of-beta}.
\end{lemma}

\begin{proof}[Proof of \autoref{lemma:any-beta}]
Taking the derivate of \eqref{eq:def-of-beta}, one obtain:
\begin{equation}\label{eq:beta-lambda}
	\lambda^-(y) = \beta'(y)\lambda^+(\beta(y)).
\end{equation}
Check that for any function $\lambda^+$ that satisfies \eqref{eq:l-2} the corresponding function $\lambda^-$ defined by \eqref{eq:beta-lambda} also satisfies \eqref{eq:l-2}. Recall that due to \eqref{eq:beta-construction}
$\beta'(y) = \sqrt{\theta(y)}$ for some $\theta\in\Theta$.

Consider some small right half-neighbourhood  $V=[-1, \delta_2)$. For any $y\in V$ 
\[
    \lambda^-(y)=-\sqrt{\theta(y)}\lambda^+(\beta(y)),~\beta(-1)=\varkappa>-1.
\]
The last factor in the right-hand side tends to a positive constant as $y\searrow -1$, therefore $\lambda^{-}$ satisfies condition \eqref{eq:l-2} near point $-1$.

Consider small left half-neighbourhood $V=(1-\delta_3, 1]$. For any $y\in V$
\[
    \lambda^-(y) = \beta'(y)\sqrt{\theta^+(y)},\quad \beta'(1)\ne 0.
\]
It follows that $\lambda^-$ also satisfies \eqref{eq:l-2} near point $1$.

Now satisfy condition \eqref{eq:l-1}. Consider the sequence of implications: 
\begin{enumerate}
\item The jet of $\lambda^+$ at $y=\varkappa$ defines (due to \eqref{eq:beta-lambda}) the jet of $\lambda^-$ at $y=-1$.
\item The jet of $\lambda^-$ at $y=-1$ defines (due to \eqref{eq:l-1}) the jet of $\lambda^+$ at $y=-1$.
\item The jet of $\lambda^+$ at $y=-1$ defines (due to \eqref{eq:beta-lambda}) the jet of $\lambda^-$ at $y=1$.
\item The jet of $\lambda^-$ at $y=1$ defines (due to \eqref{eq:l-1}) the jet of $\lambda^+$ at $y=1$.
\end{enumerate}
This sequence imposes some conditions on the derivatives of $\lambda^+$ at the points $y=-1,\varkappa,1$. For any function $\lambda^+$ that satisfies \eqref{eq:l-1}, \eqref{eq:l-2} and these conditions the corresponding function $\lambda^-$ will satisfy \eqref{eq:l-1}.

This proves the Lemma.
\end{proof}

The only thing to do is to prove \autoref{prop:Q-decomp} and \autoref{prop:Q-decomp-deriv}.

\section{Proof of approximations of the Poincar\'e map}\label{sec:techn}
\subsection{$C^0$-approximation}\label{ssec:approx}
In this section we provide a rigorous proof of \autoref{prop:Q-decomp}. See the
statement and the heuristic proof in \autoref{ssec:heur}.

\begin{proof}
    The proof goes in 3 steps.

\step{No too early releases} 
First we show that the trajectory $\psi$ with the initial condition $\psi(y_0)=0$ cannot dettach from the grand canard
at some fixed point to the left of $\beta(y_0)$. 

Indeed, consider arbitrary point
$\beta_1\in (\wt\alpha^-,\beta(y_0))$. 
Due to the definition of the map $\beta$ (see \eqref{eq:def-of-beta}) and the monotonicity of 
$\int_{y_1}^{y_2}\lambda^+dy$
 with respect to~$y_2$, the following estimate holds:
\begin{equation}\label{eq:beta_1}
\int_{y_0}^{1}\lambda^-dy + 
\int_{-1}^{\beta_1}\lambda^+dy
< 0
\end{equation}
Consider a vertical segment $J_0=\{y_0\}\times(x_1,x_2)$ that intersects $M^-$ and $\Sigma^-$  but does not intersect $M^+$. The trajectory $\psi$ intersects $J_0$.
Let the segments of the grand canard in the basic strip be given by functions $\xgc_+(y)$
(for the part near~$M^+$) and $\xgc_-(y)$ (for the part near $M^-$). 
It follows from
the geometric singular perturbation theory~\cite{Fe} that outside of some
neighbourhood of the jump points 
$$\xgc_\pm(y)=M^\pm(y)+O(\eps).$$  
Therefore, 
the
grand canard intersects $J_0$ at the point $\xgc_-(y_0)<0$. Let 
$$J_0'=\rPeab{y_0}{\pi}([\xgc_-(y_0),0]).$$
It follows from~\autoref{thm:P'est-full} that 
\begin{equation}\label{eq:J_0}
|J_0'|\le 
C_1
\exp
\dfrac{1}{\varepsilon}
\left(
\int_{y_0}^{\pi}f'_xdy + o(1)
\right)
\end{equation}
for some constant $C_1$.

Now consider arbitrary fixed vertical segment
$J_1=\{\beta_1\}\times(x_1',x_2')$
that intersects $M^+$ and does not intersect~$M^-$. As previously, the grand
canard intersects $J_1$ at some point $\xgc_+(\beta_1)$ for $\eps$ small enough. We will prove that the
trajectory $\psi$ intersects~$J_1$ as well. Let 
$$J_1'=\lPeab{\beta_1}{-\pi}([\xgc_+(\beta_1),x_2']).$$
It follows
from~\autoref{thm:P'est-full} that
\begin{equation}\label{eq:J_1}
   |J_1'|\ge C_2 \exp
   \dfrac{1}{\varepsilon}
   \left(
   		\int_{-1}^{\beta_1}f'_x dy + o(1)
	\right).
\end{equation}
The lower borders of the segments $J_0'$ and $J_1'$ coincide: both are the point
of the intersection of the grand canard and the cross-section $\{y=\pm \pi\}$.
It follows from~\eqref{eq:beta_1}, \eqref{eq:J_0} and~\eqref{eq:J_1} that
$|J_1'|\gg |J_0'|$ and therefore $J_0'\subset J_1'$. As trajectory $\psi$
intersects $J_0'$ it therefore intersects $J_1'$ and $J_1$.

\step{No too late releases}
Prove that $Q^-_\eps(y_0)$ exists and moreover $Q^-_\eps(y_0)<\beta_2$ for any  fixed $\beta_2\in(\beta(y_0),\wt \alpha^+)$ and sufficiently small $\varepsilon$. On the previous step we proved that the trajectory $\psi$ intersects the vertical segment $J_2$ to the left of $\beta(y_0)$. Now consider a segment $J_3=\{\beta_2\}\times(0, \pi)$ to the right of $\beta(y_0)$. Then the trajectory $\psi$ does not intersect $J_3$. This can be proved by application of the previous step to the system with the time reversed.

Now consider a region bounded by the circles $y=\beta_1$, $y=\beta_2$, the grand canard $x=\xgc_+(y)$ and
the segment $\Sigma^+$. The trajectory $\psi$ enters this region at some point of the segment $J_1$. As $y$ monotonically increase, the trajectory has to leave the region. It cannot intersect $y=\beta_2$ and the grand canard. Therefore, it intersects $\Sigma^+$ at some point $(Q_{\varepsilon}^+(y_0),\pi)$. 

\step{Asymptotics of $Q_{\varepsilon}^-(y_0)$}
Assume that $Q_{\varepsilon}^-(y_0)$ is defined. Let
$$R^2_\eps=\rPeab{-\pi}{Q_{\varepsilon}^-(y_0)}\circ\rPeab{y_0}{\pi}.$$
Then
$$\frac{R^2_\eps(0)-R^2_\eps(\xgc_-(y_0)}{0-\xgc_-(y_0)}=\frac{\pi-M^+(y_0)+O(\eps)}{0-M^-(y_0)+O(\eps)}.$$
Due to Mean Value Theorem, it follows that the derivative of $R_\eps$ at some point~$x^*$ has to be of order constant. However Theorem~\ref{thm:P'est-full} and the chain rule imply that
\begin{equation}\label{eq:Reps2}
    \log (R^2_\eps)'_x=
    \dfrac{1}{\varepsilon}
    \left(
    	\int_{y_0}^{1}\lambda^+dy +
    	\int_{-1}^{Q_{\varepsilon}^{-}(y_0)}\lambda^-dy +	
		O(\varepsilon^{\nu})    
    \right).
\end{equation}
The right-hand side of~\eqref{eq:Reps2} is bounded away from zero and infinity if the following estimate holds: 
$$
\int_{y_0}^{1}\lambda^+dy 
+
\int_{-1}^{Q_{\varepsilon}^{-}(y_0)}\lambda^-dy 	
=
O(\eps^\nu),
$$
which differs from equation~\eqref{eq:def-of-beta} only on $O(\eps^\nu)$. Inverse Function Theorem finishes the proof.\end{proof}
\subsection{$C^1$-approximation}\label{ssec:Q-deriv}
In this section we prove \autoref{prop:Q-decomp-deriv}.

Denote the coordinates on $\Sigma^{\pm}$ by $y_{\pm}$. 
Decompose the Poincar\'e map $Q_{\varepsilon}$ in the following way: 
\begin{equation}\label{eq:Q-decomp2}
Q_{\varepsilon}\colon 
y_{-}^{(1)}\stackrel{Q^-_\eps}{\mapsto}y_{+}^{(2)}\stackrel{Q^+_\eps}{\mapsto} y_{-}^{(3)}.
\end{equation}
Consider a trajectory that intersects $\Sigma^{-}$ at the point $y^{(1)}_{-}$, after that $\Sigma^{+}$ at the point $y^{(2)}_{+}$, after that $\Sigma^{-}$ at the point $y^{(3)}_{-}$.
For any $y$ definition \eqref{eq:def-of-beta} of the map $\beta$ implies
\begin{equation}\nonumber
	\beta'(y) = \dfrac{\lambda^-(y)}{\lambda^+(\beta(y))}.
\end{equation}
By the definition $\lambda^-(y) = f'_{x} (M^-(y),y) = f'_x (x^{gc}_{-}(y),y) + o(1)$. Due to \autoref{prop:Q-decomp} $y_+^{(2)} = Q_{\varepsilon}^{-} (y_{-}^{(1)}) = \beta(y_{-}^{(1)}) + O(\varepsilon^{\nu})$.  
Therefore the statement of \autoref{prop:Q-decomp-deriv} follows from the  following estimates:
\begin{equation}\label{eq:Q^--deriv-full}
		(Q_{\varepsilon}^{-})' 
		=
		\dfrac  {f'_x(x^{gc}_{-}(y_{-}^{(1)}),y_{-}^{(1)})}
		        {f'_x(x^{gc}_{+}(y_{+}^{(2)}),y_{+}^{(2)})}
		\cdot (1+o(1)),
\end{equation}
\begin{equation}\nonumber
		(Q_{\varepsilon}^{+})'
		=
		\dfrac  {f'_x(x^{gc}_{-}(y_{+}^{(2)}),y_{+}^{(2)})}
		        {f'_x(x^{gc}_{+}(y_{-}^{(3)}),y_{-}^{(3)})}
		\cdot (1+o(1)).
\end{equation}

Introduce some notation.
Fix parameter $\varepsilon\in R_n$ such that the grand canard exists. 
Fix the corresponding grand canard. Consider its part which is bounded by the intersections with $J^+$ and $J^-$. It can be described as double-valued function $y\mapsto x$.   Denote it by $x=x^{gc}(y)$. We will also use the notation $x=x^{gc}_+(y)$ and $x=x^{gc}_-(y)$ in the neighborhood of $M^+$ and $M^-$ respectively. 

Recall Theorem 3 from \cite{GI} and equation (3.5) from  \cite{IS2}:

\begin{theorem} 
\label{thm:linearization}
Family \eqref{eq:slow_fast_system} for small $\varepsilon>0$ in the neighborhood of the slow curve (and outside any small fixed neighborhood of the jump points) is smoothly orbitally equivalent to the family
$$
	\dot{x}=f'_x(s(y,\varepsilon),y,\varepsilon),~
	\dot{y}=\varepsilon.
$$
\end{theorem}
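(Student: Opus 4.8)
I would read the statement as a parametric Fenichel normal-form result: near a normally hyperbolic branch of the slow curve the family is (smoothly, hence orbitally) equivalent to a fast linear equation lying over the trivial slow flow $\dot y=\eps$. The plan is to prove it in three steps; the details coincide with those of \cite{GI,IS2}.

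\emph{Invariant slow manifold.} Fix a branch $M^\pm$ and a compact $y$-interval $K$ disjoint from a fixed neighbourhood of the fold values $y=\pm1$. On $K$ the point $x=M^\pm(y)$ is a hyperbolic equilibrium of the frozen fast field $\dot x=f(x,y,0)$, and by \eqref{eq-nondeg-nondeg} together with compactness $f'_x$ is bounded away from zero along the corresponding arc of $M$; thus $\{x=M^\pm(y)\}$ is normally hyperbolic, with a spectral gap of order $1$ against a slow rate of order $\eps$. Geometric singular perturbation theory \cite{Fe} then provides, for all small $\eps>0$, a locally invariant manifold $x=s(y,\eps)$, smooth jointly in $(y,\eps)$ (to any prescribed finite order, which is all that is needed below), with $s(y,0)=M^\pm(y)$, $s(y,\eps)=M^\pm(y)+O(\eps)$, and induced dynamics $\dot y=\eps$.

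\emph{Straightening and linearization.} The shift $X=x-s(y,\eps)$ turns the family into $\dot X=a(y,\eps)X+X^2 b(X,y,\eps)$, $\dot y=\eps$, with $a(y,\eps)=f'_x(s(y,\eps),y,\eps)$ bounded away from zero and $b$ smooth. It remains to remove the term $X^2 b$ by an equivalence fixing $\{X=0\}$ and the fibration $\{y=\mathrm{const}\}$. I would construct the linearizing coordinate $\xi=h(X,y,\eps)$ by solving along characteristics the homological equation $(\partial_X h)\,(aX+X^2b)+\eps\,(\partial_y h)=a\,h$; explicitly, integrating $\tfrac{d}{dt}\log X=a+Xb$ along trajectories and setting $h(X,y,\eps)=X\exp\!\big(\int_0^{\pm\infty} X(t)\,b(X(t),y(t),\eps)\,dt\big)$ (with $\pm\infty$ the time direction along which trajectories of this branch tend to $\{X=0\}$) yields $h=X+O(X^2)$, smooth in $(X,y,\eps)$, the improper integral converging because the fast rate $|a|\ge c>0$ dominates the slow drift of order $\eps$ and the fast direction is one-dimensional, so there are no internal resonances. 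In the coordinate $\xi$ the family reads $\dot\xi=f'_x(s(y,\eps),y,\eps)\,\xi$, $\dot y=\eps$, the claimed model; reversing time covers the opposite branch.

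\emph{Main obstacle.} The delicate point throughout is uniformity as $\eps\to0$: one must check that the invariant manifold and the linearizing change of coordinates exist and are smooth on one fixed neighbourhood of $M^\pm$ for \emph{every} small $\eps$, with all error terms genuinely $O(\eps)$. This works because, outside a fixed neighbourhood of the folds, $f'_x$ is bounded away from zero along $M$, so the Fenichel spectral-gap estimates and the contraction estimates entering the linearization are uniform in $\eps$; at the fold points normal hyperbolicity degenerates, which is exactly why the statement is formulated only in the complement of a fixed neighbourhood of the jump points.
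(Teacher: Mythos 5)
The paper does not prove this statement at all: it is quoted verbatim as Theorem~3 of \cite{GI} (together with equation~(3.5) of \cite{IS2}), so there is no internal proof to compare against. Your plan reconstructs exactly the standard argument behind that cited result --- Fenichel's invariant slow manifold $x=s(y,\eps)$ over a compact $y$-interval bounded away from the fold values, the shift $X=x-s(y,\eps)$ producing $\dot X=a(y,\eps)X+X^2b$, and a fiberwise linearization killing the quadratic remainder, with no resonances because the fast fiber is one-dimensional and $a$ is bounded away from zero --- and the outline is correct. One technical point deserves attention if you were to write this out in full: the formula $h(X,y,\eps)=X\exp\bigl(\int_0^{\pm\infty}X(t)b\,dt\bigr)$ integrates along a trajectory for infinite time, but the base point $y(t)=y_0\pm\eps t$ exits the compact strip (and approaches the degenerate jump points) after time of order $1/\eps$, so the improper integral is not literally taken inside the domain of validity. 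The standard fixes are either to extend or modify the field outside the strip so that the construction closes up, or to truncate the integral at the exit time and absorb the exponentially small tail $O(e^{-c/\eps})$ into the smooth conjugacy; either way the resulting $h$ is smooth and uniformly $X+O(X^2)$ in $\eps$, which is all the theorem claims. With that caveat your route coincides with the one in \cite{GI,IS2}.
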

Due to \cite{IS2} the function $x=s(y,\varepsilon)$ above represents the true slow curve; it is defined in a non-unique way but all true slow curves are exponentially close to each other and one can pick arbitrary one.
We choose segments of the grand canard bounded away from the jump points as this true slow curve, so put $s(y,\varepsilon)=x^{gc}_{\pm}(y,\varepsilon)$. 
Below we will omit $\varepsilon$ in $x^{gc}_{\pm}(y,\varepsilon)$ for brevity.
In \autoref{thm:linearization} take the neighborhood of the jump points sufficiently small such that small neighbourhood of $\{(M^{\pm}(y),y)\mid y\in I_\delta\}$ is strictly inside the linearized area (we finally fix the linearized area and the basic strip after the proof of  \autoref{prop:alpha-exists}). 
Consider two vertical transversal segments $\hat{J}^{\pm}$ that intersect  $M^+$ and $M^-$ respectively in the neighbourhood of the jump points. Denote their $y$-coordinates by $\hat{\alpha}^{+}$ and $\hat{\alpha}^{-}$, $\hat{\alpha}^{\pm} = \hat{\alpha}^{\pm}(\varepsilon)$.

\begin{proposition} 
\label{prop:alpha-exists}
One can take parameters 
$\hat{\alpha}^{\pm}(\varepsilon)$ depending on $\eps$ and
bounded away from $y(G^{\pm})$ uniformly in $\eps$
such that the following equality takes place for sufficiently small values of $\varepsilon$:
\begin{equation}\label{eq:int-zero-gc}
	\int_{\hat{\alpha}^-}^{\hat{\alpha}^+}
        f'_x
        (x^{gc}(y),y,\varepsilon)
    dy 
    = 0.
\end{equation}
\end{proposition}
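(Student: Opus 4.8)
The plan is to reduce \eqref{eq:int-zero-gc} to an $\eps$‑independent identity along the critical curve $M$ and then solve it by a perturbation argument. By \autoref{thm:linearization} the grand canard coincides, inside the linearized neighbourhood of $M$, with a true slow curve $\xgc_\pm$, so the left‑hand side of \eqref{eq:int-zero-gc} — the line integral of $f'_x$ along the grand canard between the cross‑sections $\hat J^-$ and $\hat J^+$ — is a smooth function $I(\hat\alpha^+,\hat\alpha^-,\eps)$ of the cross‑section positions, defined for $\hat\alpha^+$ in a window $W^+=(\ell,a_0)$ and $\hat\alpha^-$ in $W^-=(b_{-1},r)$, where $[a_0,b_{-1}]\Subset(\ell,r)$ is the $y$‑range of the linearized neighbourhood and where, as already arranged before the statement, $\ell$ may be taken arbitrarily close to $-1$ and $r$ arbitrarily close to $1$ by shrinking the excluded neighbourhoods of $G^\pm$. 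Differentiating in an endpoint just produces the boundary value of the integrand, $\partial I/\partial\hat\alpha^+=f'_x(\xgc_+(\hat\alpha^+),\hat\alpha^+,\eps)$ and $\partial I/\partial\hat\alpha^-=-f'_x(\xgc_-(\hat\alpha^-),\hat\alpha^-,\eps)$; by the Fenichel estimate $\xgc_\pm(y)=M^\pm(y)+O(\eps)$ away from the folds these equal $\lambda^\pm(\hat\alpha^\pm)+O(\eps)$, which by the nondegeneracy assumption \eqref{eq-nondeg-nondeg} are bounded away from $0$ on compact subwindows for small $\eps$; so $I$ is strictly monotone in each variable and it suffices to exhibit a sign change.

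First I would identify the $\eps\to0$ limit of $I$. Between $\hat J^-$ and $\hat J^+$ the grand canard consists of a short arc of $M^-$ from $\hat\alpha^-$ to the fold $G^-$, the fold passage at $G^-$, the rotational phase, the fold passage at $G^+$, and a short arc of $M^+$ from $G^+$ to $\hat\alpha^+$. On the two slow arcs $f'_x(\xgc_\pm(y),y,\eps)=\lambda^\pm(y)+O(\eps)$ away from the folds, while $\xgc_\pm$ stays $O(\eps^{2/3})$‑close to $M^\pm$ over the $O(\eps^{2/3})$‑wide entry–exit zones near the folds; the two fold passages contribute $o_\eps(1)$ to the integral (entry–exit estimates, cf.~\cite{MR}); and the rotational phase contributes $O(\eps)$, since $\int f'_x\,dy=\eps\int f'_x\,dt$, $f$ is $2\pi$‑periodic in $x$ so one full turn contributes $O(\eps)$ to $\int f'_x\,dt$, and there are $O(1/\eps)$ turns. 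Hence, uniformly on compacts,
\[
 I(\hat\alpha^+,\hat\alpha^-,\eps)=G(\hat\alpha^+,\hat\alpha^-)+o_\eps(1),\qquad
 G(\hat\alpha^+,\hat\alpha^-):=\int_{\hat\alpha^-}^{1}\lambda^-(y)\,dy+\int_{-1}^{\hat\alpha^+}\lambda^+(y)\,dy ,
\]
with $G$ independent of $\eps$.

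Next I would find a regular zero of $G$. Since $M^-$ is attracting and $M^+$ repelling, $\lambda^-<0<\lambda^+$ on $(-1,1)$, so the first summand of $G$ is negative and the second positive. When $\hat\alpha^-$ runs over $W^-$ the modulus of the first summand sweeps an interval whose lower endpoint tends to $0$ as $r\to1$, and when $\hat\alpha^+$ runs over $W^+$ the second summand sweeps an interval whose lower endpoint tends to $0$ as $\ell\to-1$; choosing the excluded neighbourhoods of $G^\pm$ small enough, these two intervals overlap, so $G$ vanishes at an interior point $(\hat\alpha^+_*,\hat\alpha^-_*)$ of $W^+\times W^-$, where $\nabla G=(\lambda^+(\hat\alpha^+_*),-\lambda^-(\hat\alpha^-_*))\ne0$ by \eqref{eq-nondeg-nondeg}. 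Fixing $\hat\alpha^-:=\hat\alpha^-_*$, the function $\hat\alpha^+\mapsto I(\hat\alpha^+,\hat\alpha^-_*,\eps)=G(\hat\alpha^+,\hat\alpha^-_*)+o_\eps(1)$ is strictly monotone on a small interval around $\hat\alpha^+_*$ and $G(\cdot,\hat\alpha^-_*)$ changes sign across $\hat\alpha^+_*$, so the intermediate value theorem yields, for every small $\eps$ admitting a grand canard, some $\hat\alpha^+(\eps)\in W^+$ with $I(\hat\alpha^+(\eps),\hat\alpha^-_*,\eps)=0$, i.e.\ \eqref{eq:int-zero-gc}. The required inequalities $\hat\alpha^+(\eps)<a_0$, $b_{-1}<\hat\alpha^-_*$ and the inclusion of both cross‑sections in the linearized area then hold by construction.

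The hard part is the second step: controlling the contributions of the two fold passages and of the rotational phase so that the $\eps\to0$ limit of $I$ is exactly the critical‑curve integral $G$. Everything else is the intermediate value theorem together with the nondegeneracy assumption \eqref{eq-nondeg-nondeg}.
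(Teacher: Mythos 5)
Your proposal is correct and follows essentially the same route as the paper: identify the $\eps\to 0$ limit of the integral as $\Phi^-\rarc{\hat\alpha^-}{1}+\Phi^+\rarc{-1}{\hat\alpha^+}$, pick a zero of that limit by continuity, and then correct for the $o(1)$ error via the intermediate value theorem using that $f'_x$ is bounded away from zero near the chosen endpoint. The ``hard part'' you flag (fold passages plus rotation contributing $o_\eps(1)$) is exactly what the paper obtains by combining the equation in variations with \autoref{thm:P'est-full} (and its time-reversed version), so it can be invoked rather than re-derived.
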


\begin{proof}[Proof of \autoref{prop:alpha-exists}]
Equation in variations implies:
\begin{equation}\label{eq:hell-proposition-0}
	\int_{\hat{\alpha}^-}^{\hat{\alpha}^+}
        f'_x
        (x^{gc}(y),y,\varepsilon)
    dy 
    =
    \varepsilon
    \cdot
    \log
    	\left(
    		P	_{\varepsilon}
    			^{\rarc{\hat{\alpha}^-}{\hat{\alpha}^+}}
			(x^{gc})
    	\right)'_x.
\end{equation}
Due to \autoref{thm:P'est-full} for some 
$r(\varepsilon) = O(\varepsilon^{\nu})$ the following holds
\begin{equation}\label{eq:hell-proposition-begins}
\varepsilon
   \cdot
   \log
   	\left(
   		P	_{\varepsilon}
   			^{\rarc{\hat{\alpha}^-}{\hat{\alpha}^+}}
		(x^{gc})
   	\right)'_x
=
\int_{\hat{\alpha}^-}^{1} f'_x(x^{gc}_-) dy
+
\int_{-1}^{\hat{\alpha}^+} f'_x(x^{gc}_+) dy
+
r(\varepsilon).
\end{equation}
Consider sum of the integrals in the right-hand side of the estimate. The functions 
$\int_{A}^{B} f'_x(x^{gc}_{\pm}) dy$
 are continuous and monotonic in $A$, $B$ and tend to zero as $B$ tends to $A$. Fix any point $\alpha_1^+>-1$, $\alpha_1^+$ close to $-1$. Then for any $\alpha_1^-<1$ such that $\alpha_1^-$ sufficiently close to $1$ the corresponding sum of the integrals is positive
\begin{equation}\label{eq:positive}
\int_{\alpha_1^-}^{1} f'_x(x^{gc}_-) dy
+
\int_{-1}^{\alpha_1^+} f'_x(x^{gc}_+) dy
>0.
\end{equation}
Define in the same way $\alpha_2^-<\alpha_1^-$ and 
$\alpha_2^+\in(-1,\alpha_1^+)$ such that sum of integrals is negative:
\begin{equation}\label{eq:negative}
\int_{\alpha_2^-}^{1} f'_x(x^{gc}_-) dy
+
\int_{-1}^{\alpha_2^+} f'_x(x^{gc}_+) dy
<0
\end{equation}
Let constant $\hat{c}>0$ is small enough such that zero in the right-hand side of  inequalities~\eqref{eq:positive} and~\eqref{eq:negative} can be replaced by $\hat{c}$ and $-\hat{c}$ respectively. For sufficiently small $\varepsilon$ we can estimate the last term in \eqref{eq:hell-proposition-begins}:
\begin{equation}\nonumber
	\hat{c}>|r(\varepsilon)|.
\end{equation}
Summarize equalities \eqref{eq:hell-proposition-0}, \eqref{eq:hell-proposition-begins} and estimates given above:
\begin{equation}
	\int_{\alpha_1^-}^{\alpha_1^+}
        f'_x
        (x^{gc}(y),y,\varepsilon)
    dy > 0, 
\end{equation}
\begin{equation}
	\int_{\alpha_2^-}^{\alpha_2^+}
        f'_x
        (x^{gc}(y),y,\varepsilon)
    dy < 0.
\end{equation}
Consider $\tau\in[0,1]$ and $\hat{\alpha}^{\pm}(\tau) := \tau \alpha_1^{\pm} + (1-\tau) \alpha_2^\pm$. By continuity there exists $\tau_{\varepsilon} = \tau(\varepsilon)$ such that \eqref{eq:int-zero-gc} holds for $\hat{\alpha}^{\pm}(\tau_{\varepsilon})$.
\end{proof}
Now take $\delta$ in $I_\delta$ smaller than $\min (dist(1,\alpha_1^-), dist(\alpha_2^+,-1))$ and linearized area in \autoref{thm:linearization} large enough such that 
$\hat{\alpha}^{\pm}\in I_\delta$ and the segments $\hat{J}^{\pm}$ are in the linearized area.

Denote by $\eta_{\pm}$ the vertical component of the linearizing charts in the neighborhood of $M^{\pm}$ respectively. 
Moreover, due to the choice of $s(y,\varepsilon)$ in \autoref{thm:linearization}, the grand canard $x^{gc}(y)$ is given by the equation $\eta_{\pm}=0$ in the neighborhood of $M^{\pm}$.
Take small constant $c$ and let $\xi_{\pm}$ be $y$-coordinate on the line $\{\eta_{\pm}=c\}$. 
Decompose $Q_{\varepsilon}^-$ as
$$
	Q_{\varepsilon}^-\colon
	\Sigma^-\to \{\eta_-=c\}\to \hat{J}^- \to \hat{J}^+ 
	\to \{\eta_+=c\}\ \to 	\Sigma^+,
$$
$$
	Q_{\varepsilon}^-\colon
	y_{-}\mapsto\xi_{-}\mapsto\eta_{-}\mapsto
	\eta_{+}\mapsto\xi_{+}\mapsto y_{+}.
$$ 
\begin{figure}[t!]
\centering
\includegraphics{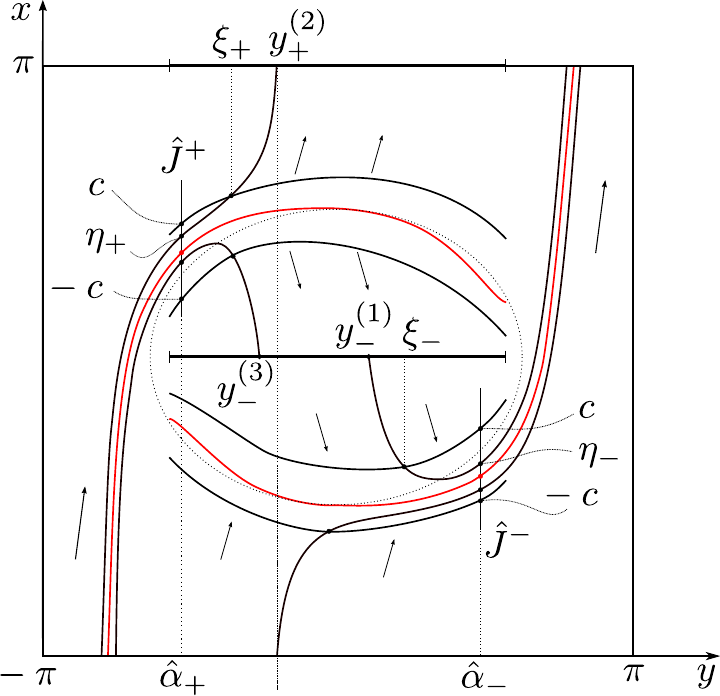}
\caption{Coordinates in \autoref{prop:Q-decomp-deriv}}
\end{figure}

\begin{proof}[Proof of \autoref{prop:Q-decomp-deriv}]
Below we prove only equality \eqref{eq:Q^--deriv-full} for the map 
$Q_{\varepsilon}^{-}$. Proof for $Q_{\varepsilon}^{+}$ is similar.

We have to estimate the derivative
\begin{equation} \label{eq:Q-'}
	\left({Q_{\varepsilon}^{-}}\right)'=
	\dfrac{dy_{+}}{dy_{-}} = 
    \dfrac{dy_{+}}{d\xi_{+}}\cdot
	\dfrac{d\xi_{+}}{d\eta_{+}}\cdot
	\dfrac{d\eta_{+}}{d\eta_{-}}\cdot
	\dfrac{d\eta_{-}}{d\xi_{-}} \cdot
    \dfrac{d\xi_{-}}{dy_{-}}.
\end{equation}
The constant $c$ in the definition of the cross-section $\{\eta_{\pm}=c\}$ does not depend on $\varepsilon$ and the trajectory spends bounded time between the intersections with $\Sigma^{\pm}$ and $\{\eta_{\pm}=c\}$, therefore
\begin{equation}\label{eq:xi-y}
	   \xi_{\pm}(y_{\pm}) = y_{\pm} + \varepsilon K_{\pm}(\varepsilon,y_{\pm}),~
	            \text{$K_{\pm}$ is smooth.}
\end{equation}

The charts $(\eta_{\pm},\xi_{\pm})$ are linearized, therefore due to \autoref{thm:linearization} the following takes place:
\begin{equation}\nonumber
	\eta_{-}(\xi_{-}) 
		= 
		c
		\exp
		\frac{1}{\varepsilon}
		\int_{\xi_{-}}^{\hat{\alpha}^{-}}f'_x(x^{gc}_{-}(y),y)dy,	
\end{equation}
\begin{equation}\nonumber
\eta_{+}(\xi_{+}) 
	= 
		c
		\exp
		\left(-
		\frac{1}{\varepsilon}
		\int_{\hat{\alpha}^{+}}^{\xi_{+}}f'_x(x^{gc}_{+}(y),y)dy
		\right).
\end{equation}
Calculate the derivatives:
\begin{equation}
\label{eq:eta_1}
	\begin{aligned}
		\frac{d\eta_{-}}{d\xi_{-}} & = 
			-\frac{c}{\varepsilon}
			f'^{-}_x(x^{gc}_{-}(\xi_{-}),\xi_{-})
			\exp
		\frac{1}{\varepsilon}
		\int_{\xi_{-}}^{\hat{\alpha}^{-}}f'_x(x^{gc}_{-}(y),y)dy, \\
		\frac{d\eta_{+}}{d\xi_{+}} & = 
			-\frac{c}{\varepsilon}
			f'^{+}_x(x^{gc}_{+}(\xi_{+}),\xi_{+})
			\exp
		\left(-
		\frac{1}{\varepsilon}
		\int_{\hat{\alpha}^{+}}^{\xi_{+}}f'_x(x^{gc}_{+}(y),y)dy
		\right) . \\
	\end{aligned}
\end{equation}
Below we omit the argument of $x^{gc}_{\pm}$ for brevity.

Now estimate the factors in the decomposition \eqref{eq:Q-'} for some trajectory $x(y)$ using \eqref{eq:xi-y} and \eqref{eq:eta_1} :

\begin{multline}\label{eq:Q'decomp}
	{Q_{\varepsilon}^{-}}' = 
    \dfrac{1+\varepsilon K_{-}}{1+\varepsilon K_{+}}\cdot
	\dfrac	{f'^{-}_x(x^{gc}_{-},\xi_{-})}
			{f'^{+}_x(x^{gc}_{+},\xi_{+})}\cdot
	\exp{
		\dfrac{1}{\varepsilon}
		\left(
			\int_{\xi_{-}}^{\hat{\alpha}^{-}}f'_x(x^{gc}_{-},y)dy +
			\int_{\hat{\alpha}^{+}}^{\xi_{+}}f'_x(x^{gc}_{+},y)dy
		\right)
	}\cdot
	\dfrac{d\eta_{+}}{d\eta_{-}}
\end{multline}
Estimate the last factor:
\begin{equation}
\label{eq:eta_factor}
    \left.
    \dfrac
        {d\eta_{+}}
        {d\eta_{-}}
    \right|_{(x(y),y)}
    =
    \exp{
        \frac
            {1}
            {\varepsilon}
        \int_{\hat{\alpha}^-}^{\hat{\alpha}^+}
            f'_x(x(y),y,\varepsilon)
        dy
    }.
\end{equation}
Consider the integral in the argument of $\exp$-function.
By construction the trajectory $x(y)$ intersects $\{\eta_-=c\}$ at the point $\xi_-$ inside  segment $[a_0,b_0]$, and $\xi_-<\hat{\alpha}^-$.
Due to \autoref{thm:P'est-full}  for any $y\in\rarc{1}{-1}$ 
the following approximations take place:
\begin{equation}\label{eq:693}
	\left(
		P^{\rarc{\xi_-}{y}}_{\varepsilon}
	\right)_x'
	=
	\exp
		\frac{1}{\varepsilon}
		\left(
			\int_{\xi_{-}}^{1}\lambda^{-}dy
			+
			O(\varepsilon^{\nu})
		\right),
\end{equation}
$$
	\left(
		P^{\rarc{y}{\hat{\alpha}^+}}_{\varepsilon}
	\right)_x'
	=
	\exp
		\frac{1}{\varepsilon}
		\left(
			\int_{-1}^{\hat{\alpha}^+}\lambda^+dy
			+
			O(\varepsilon^{\nu})
		\right).
$$
Due to the definition of $\hat{\alpha}^{\pm}$ (see \autoref{prop:alpha-exists}) the following holds: there exists a constant $M>0$ such that
\begin{equation}\label{eq:694}
	\int
		_{\xi_{-}}
		^{\hat{\alpha}^-}
		\lambda^-
	dy 
	+
	\int
		_{\hat{\alpha}^-}
		^{1}
		\lambda^-
	dy
	+
	\int
		_{-1}
		^{\hat{\alpha}^+}
		\lambda^+
	dy
	<
	-M
    .
\end{equation}
Therefore for any $y\in(\hat{\alpha}^-,\hat{\alpha}^+)$ the following estimate takes place:
$$
	x(y)-x^{gc}(y) <
	\exp\left(-\frac{M}{\varepsilon}\right),\quad
    M=\mathrm{const}.
$$
Indeed, for $y\in(\xi_-,1)$ it follows from \eqref{eq:693} and for $y\in(1,\alpha^+)$ it follows from \eqref{eq:694}.
Further,
\begin{equation}\label{eq:o-eps-compare}
  \int_{\hat{\alpha}^-}^{\hat{\alpha}^+}
  \left(
    f'_x(x(y),y,\varepsilon)-
    f'_x(x^{gc}(y),y,\varepsilon)
  \right)
  dy 
  <
	\int_{\hat{\alpha}^-}^{\hat{\alpha}^+}
    \int_0^{\exp\left({-{M}/{\varepsilon}}\right)}
      f''_{xx} 
    dx 
  dy 
  =
  o(\varepsilon).
\end{equation}
Due to \autoref{prop:alpha-exists} the left-hand side of inequality \eqref{eq:o-eps-compare} equals 
$
\int_{\hat{\alpha}^-}^{\hat{\alpha}^+}
f'_x(x(y),y,\varepsilon) 
dy
$.
Therefore due to \eqref{eq:eta_factor} the  
following estimate takes place:
$$
	\left.
    \dfrac
        {d\eta_{+}}
        {d\eta_{-}}
    \right|_{(x(y),y)}
    =
    1+o(1).
$$
Now estimate the factor with $\exp$-function in \eqref{eq:Q'decomp}:
\begin{equation}\label{eq:thirdFactor}
	\exp{
		\dfrac{1}{\varepsilon}
		\left(
			\int_{\xi_{-}}^{\hat{\alpha}^{-}}f'_x(x^{gc}_{-},y)dy +
			\int_{\hat{\alpha}^{+}}^{\xi_{+}}f'_x(x^{gc}_{+},y)dy
		\right)
	}
\end{equation} We use the following obvious statement:

\begin{proposition} 
\label{prop:x*}
  Let for some $\xi_{\pm}$ there exists a trajectory $x(y)$ that passes through the points $(\xi_{-},c)$ and $(\xi_{+},c)$. Then there exists a trajectory $x^*(y)$ such that
  \begin{equation}\label{eq:int-zero-*}
    \int_{\xi_{-}}^{\xi_{+}}
    f'_x
    (x^*(y),y,\varepsilon)
    dy 
    = 0.
  \end{equation} 
\end{proposition}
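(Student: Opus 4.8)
The plan is to prove the proposition by an intermediate–value argument along a one–parameter family of trajectories. If $\xi_-=\xi_+$ the statement is trivial (take $x^*=x$), and reversing the orientation of the integral if necessary one may assume $\xi_-<\xi_+$. The two points $\xi_\pm$ lie in the compact segment $[a_0,b_0]\subset(-1,1)$, bounded away from the fold points, so on $[\xi_-,\xi_+]$ the branches $M^\pm$ are disjoint graphs over the $y$–axis, $\xgc_\pm(y)=M^\pm(y)+O(\eps)$, and by the nondegeneracy assumption~\eqref{eq-nondeg-nondeg} the quantities $\lambda^-$ and $\lambda^+$ are bounded away from zero on $[\xi_-,\xi_+]$, the former negative and the latter positive.

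First I would exhibit two comparison trajectories on $[\xi_-,\xi_+]$ with oppositely signed integrals of $f'_x$. Let $\psi_0$ be the trajectory through $(\xi_-,\xgc_-(\xi_-))$: launched on the attracting slow manifold near $M^-$, it stays $O(\eps)$–close to $M^-$ for all $y\in[\xi_-,\xi_+]$, so $\int_{\xi_-}^{\xi_+}f'_x(\psi_0(y),y,\eps)\,dy=\int_{\xi_-}^{\xi_+}\bigl(\lambda^-(y)+O(\eps)\bigr)\,dy<0$ for small $\eps$. Symmetrically, running the flow backward from $(\xi_+,\xgc_+(\xi_+))$ — a point on the repelling slow manifold near $M^+$, which is attracting in backward time — yields a trajectory $\psi_1$ defined on all of $[\xi_-,\xi_+]$ and $O(\eps)$–close to $M^+$ there, hence with $\int_{\xi_-}^{\xi_+}f'_x(\psi_1(y),y,\eps)\,dy>0$.

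Next I would connect $\psi_0$ and $\psi_1$ by the family $\psi_r$ of trajectories through $(\xi_-,r)$, where $r$ runs over the interval with endpoints $\psi_0(\xi_-)$ and $\psi_1(\xi_-)$. Since trajectories of the autonomous planar system cannot cross, and the graphs of $\psi_0,\psi_1$ stay near the disjoint curves $M^-,M^+$, every $\psi_r$ is trapped between $\psi_0$ and $\psi_1$, and so remains in a fixed compact region and is defined over the whole of $[\xi_-,\xi_+]$. By continuous dependence on initial data the map $I(r):=\int_{\xi_-}^{\xi_+}f'_x(\psi_r(y),y,\eps)\,dy$ is continuous on this interval; it is negative at one endpoint and positive at the other, so the intermediate value theorem produces $r^*$ with $I(r^*)=0$, and $x^*:=\psi_{r^*}$ is the trajectory we want.

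The one point that needs care is the construction of the ``positive'' comparison trajectory $\psi_1$: because $M^+$ is repelling, a trajectory merely started close to it \emph{forward} in time generically peels off after accumulating a bounded amount of expansion and then need not stay near $M^+$, so one must shoot \emph{backward} from $y=\xi_+$ (equivalently, take $\psi_1(\xi_-)$ inside the thin backward–trapping strip around the repelling slow manifold, which has definite size because $M^+$ is uniformly normally hyperbolic over $[a_0,b_0]$). The remaining ingredients — the signs of $\lambda^\pm$, continuous dependence of solutions over a compact $y$–interval, and the Fenichel-type estimate $\xgc_\pm=M^\pm+O(\eps)$ — are routine and have been recalled above.
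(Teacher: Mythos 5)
There is a genuine gap here, and it stems from a misreading of the geometry rather than from the logical skeleton. In the notation of \autoref{ssec:Q-deriv}, the points $(\xi_-,c)$ and $(\xi_+,c)$ live in two \emph{different} linearizing charts: $(\xi_-,\eta_-)=(\xi_-,c)$ lies near $M^-$, while $(\xi_+,\eta_+)=(\xi_+,c)$ lies near $M^+$, and a trajectory joining them must leave the neighbourhood of $M^-$ at the fold $G^-$, traverse the rotation phase through $y=\pm\pi$, re-enter near $G^+$ and follow $M^+$. Accordingly the integral in \eqref{eq:int-zero-*} is taken over the arc $\rarc{\xi_-}{\xi_+}$ passing through both jump points and the rotation phase (see the decomposition $\int_{\xi_-}^{\xi_+}=\int_{\xi_-}^{\hat{\alpha}^-}+\int_{\hat{\alpha}^-}^{\hat{\alpha}^+}+\int_{\hat{\alpha}^+}^{\xi_+}$ used immediately after the proposition), not over a short subinterval of $(-1,1)$ above which both branches $M^\pm$ are graphs. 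Your two comparison trajectories therefore do not exist: nothing stays $O(\eps)$-close to $M^-$ (or to $M^+$) over the whole of this arc, since $M^-$ terminates at $G^-$ and the arc continues through the fast-rotation region where $f'_x$ has no definite sign. The tell-tale symptom is that your argument never uses the hypothesis that a single trajectory $x(y)$ passes through both points; that hypothesis is exactly what makes the statement true, because it guarantees that the whole one-parameter family of trajectories you want to vary over actually reaches the cross-section $\{y=\xi_+\}$ inside the chart near $M^+$.

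The intended argument is close in spirit to yours but uses the two bounding trajectories that are actually available. The grand canard joins $(\xi_-,0)$ to $(\xi_+,0)$ (it is the zero level of both coordinates $\eta_\pm$), and the given trajectory joins $(\xi_-,c)$ to $(\xi_+,c)$; hence the Poincar\'e map along the flow from the segment $\{\xi_-\}\times[0,c]$ to the segment $\{\xi_+\}\times[0,c]$ fixes both endpoints and maps the first segment onto the second. By the Mean Value Theorem some intermediate trajectory $x^*$ has Poincar\'e derivative equal to $1$, and the equation in variations identifies the logarithm of that derivative with $\eps^{-1}\int_{\xi_-}^{\xi_+}f'_x(x^*(y),y,\eps)\,dy$, which therefore vanishes. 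This is your intermediate-value mechanism, but the sign computation via $\lambda^\pm$ (which is unavailable here, since neither endpoint trajectory has an integral of predetermined sign) is replaced by the exact equality of the segment lengths at the two ends.
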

\begin{proof}[Proof of \autoref{prop:x*}]
  The grand canard passes through  $(\xi_{-},0)$ and $(\xi_{+},0)$. 
  Therefore by Mean Value Theorem for the Poincar\'e map from the segment $[(\xi_{-},0),(\xi_{-},c)]$ to the segment $[(\xi_{+},0),(\xi_{+},c)]$ there exists a trajectory $x^*(y)$ with the derivative of the Poincar\'e map that is equal to $1$. Equation in variations implies \eqref{eq:int-zero-*}.
\end{proof}

Let us decompose \eqref{eq:int-zero-*} in the sum of three integrals:
$$
	\int_{\xi_{-}}^{\xi_{+}} f'_x (x^*(y),y,\varepsilon) dy =
	\int_{\xi_{-}}^{\hat{\alpha}^-} +
	\int_{\hat{\alpha}^-}^{\hat{\alpha}^+}+
	 \int_{\hat{\alpha}^+}^{\xi_{+}} =0.
$$
Literally as in \eqref{eq:o-eps-compare} the second integral has an order $o(\varepsilon)$.
Charts in the neighbourhood of the slow curve are linearized so one can replace $x^*(y)$ by
$x^{gc}(y)$ in the first and the third integrals. So the following takes place
$$
\int\limits_{\xi_{-}}^{\hat{\alpha}^{-}}f'^-_x(x^{gc}_{-},y)dy + 
o(\varepsilon)+
\int\limits_{\hat{\alpha}^{+}}^{\xi_{+}}f'^+_x(x^{gc}_{+},y)dy = 0,
$$
therefore the sum of the integrals in \eqref{eq:thirdFactor} has an order of $o(\varepsilon)$.
Therefore the third factor has an order $\exp({o(\varepsilon)}/{\varepsilon})=1+o(1)$.
The derivative $f'_{x}$ is smooth therefore due to \eqref{eq:xi-y} the following takes place:
$f'_{x}(x^{gc}_{\pm}(y_{\pm}), y_{\pm})=
 f'_{x}(x^{gc}_{\pm}(\xi_{\pm}),\xi_{\pm}) + O(\varepsilon)$.
Finally
$$
\dfrac{dy_{+}}{dy_{-}} 
    =
    \dfrac  {f'_x(x^{gc}_{-},y_{-})}
            {f'_x(x^{gc}_{+},y_{+})}
    \cdot (1+o(1))
    \cdot (1+O(\varepsilon)).
$$
\end{proof}

\section{Conjectures on unconnected slow curves}\label{sec:conjectures}
Previously, only the case of connected slow curve was studied. We believe that the
methods of the present paper can be applied also to the case of unconnected
slow curve. In this section we propose a related conjecture and outline its
 proof.

Consider system \eqref{eq:slow_fast_system} on the two-torus with the slow curve $M$ that satisfies the following properties:
\begin{enumerate}
	\item The slow curve $M$ has two connected components $M_1$ and $M_2$. Projection of $M$ on the $y$-axis along the $x$-axis is two disjoint arcs. 
	\item Each of the components $M_1$, $M_2$ has two jump points $G_1^{\pm}$ and $G_2^{\pm}$.
	\item The components $M_1$, $M_2$ are nondegenerate in the sense
        of~\autoref{def:nondeg}.
\end{enumerate}
We will call such slow curve \emph{simple unconnected}.

\begin{conjecture}\label{conj1}
For every desired number $l\in\mathbb{N}$ there exists a local topologically generic set in the space of slow-fast systems on the two-torus with the following property:
\begin{enumerate}
	\item Slow curve is nondegenerate and simple unconnected.
	\item For every system from this set there exists a sequence of intervals
			$\{R_n\}_{n=0}^{\infty}\subset\{\varepsilon>0\}$,
			accumulating at zero,
			such that for every $\varepsilon\in R_n$
			there exists at least $l$ canard limit cycles that makes one pass along the slow direction.
\end{enumerate}
\end{conjecture}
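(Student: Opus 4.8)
}
The plan is to transcribe, almost line by line, the three-part scheme used to prove \autoref{thm:main}, with the single connected component $M$ replaced by the pair $M_1,M_2$ and ``two passes along the slow direction'' replaced by ``one pass that successively visits the two disjoint $y$-arcs over which $M_1$ and $M_2$ project''. The geometric picture of a one-pass grand canard is now the following: the trajectory falls onto the stable branch $M_1^-$, follows it to the jump point $G_1^-$, makes $O(1/\eps)$ rotations through the first gap, re-enters the region of $M_2$ near $G_2^+$, follows the \emph{unstable} branch $M_2^+$, is released near some point, falls onto $M_2^-$, follows it to $G_2^-$, rotates through the second gap, re-enters the region of $M_1$ near $G_1^+$, follows $M_1^+$, is released, falls back onto $M_1^-$ and closes up. Thus a one-pass canard on a simple unconnected slow curve carries a canard segment on each of the two unstable branches, which is exactly the source of the extra flexibility.

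First I would build the ``vertical'' Poincar\'e map $Q_\eps$ between two cross-sections $\Sigma^{(1)},\Sigma^{(2)}$ (the analogs of $\Sigma^\pm$) placed in the two $y$-gaps that are free of the slow curve. For each component $M_i$ define its singular release map $\beta_i$ exactly by the balance equation of \autoref{def:def-of-beta} applied to $M_i$: the point at which the contraction accumulated along the stable branch followed just before is compensated by the expansion accumulated along $M_i^+$. Using unconnected analogs of the Shape and Monotonicity Lemmas of \cite{GI,IS1} one obtains the grand canard, the sequence of exponentially short intervals $R_n\subset\{\eps>0\}$ accumulating at $0$, and — repeating word for word the proof of \autoref{prop:Q-decomp} — the decomposition $Q_\eps=Q^{(2)}_\eps\circ Q^{(1)}_\eps$ with $Q^{(i)}_\eps(y)=\beta_i(y)+O(\eps^\nu)$, so that $Q_\eps=\beta_2\circ\beta_1+O(\eps^\nu)$.

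Second, the lower estimate. I would impose a finite system of strict inequalities on the integrals $\Phi^\pm$ over sub-arcs of the four branches $M_1^\pm,M_2^\pm$, chosen so that $\beta_2\circ\beta_1$ has $l$ pairwise disjoint trapping intervals, alternately mapped strictly into and strictly onto themselves, each carrying a fixed point — the exact analog of \autoref{lemma:lowerbound} and of the chain of inclusions used there. The inequalities are realized by the bump-function construction of \autoref{prop:bumpTrick}, assigning values of $f'_x$ on the four branches by a finite induction in which no step spoils the previously satisfied inequalities, after which $f$ is extended smoothly to the torus respecting the nondegeneracy conditions of \autoref{def:nondeg}. Note that $\beta_1$ and $\beta_2$ are now \emph{independent} maps rather than two copies of one $\beta$, so the self-composition symmetry that forced $l$ to be odd in \autoref{thm:main} disappears and every $l\in\mathbb N$ becomes attainable. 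Finally, repeating the linearization argument of \autoref{lemma:first_derivative} and \autoref{lemma:everFailed}, one estimates $Q_\eps'$ on each trapping interval as a product of ratios of $f'_x$ at grand-canard points on the branches times $1+o(1)$, so that $Q_\eps$ or $Q_\eps^{-1}$ is strictly contracting there and each interval carries exactly one hyperbolic one-pass limit cycle; hence at least $l$ canard cycles making one pass along $y$. Since only finitely many strict inequalities are used, the resulting set of systems is open; whatever additional genericity is needed to license the Shape/Monotonicity inputs keeps it at least topologically generic, which is why the statement is posed as \autoref{conj1}.

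The main obstacle — and the reason this is a conjecture rather than a theorem — is precisely the first step: the Shape Lemma and the Monotonicity Lemma, which guarantee that the two thin arcs accumulated near $M_1$ and $M_2$ rotate monotonically and meet infinitely often (so that the intervals $R_n$ exist and accumulate at $0$), were established in \cite{GI,IS1} only for connected slow curves, and \autoref{thm:P'est-full} was proved for trajectories crossing a single jump point and a single $O(1/\eps)$-rotation zone. Extending both to the situation of two separated rotation zones and two jump points per return is the technical heart that remains to be carried out; everything downstream is a routine transcription of the connected-case argument.
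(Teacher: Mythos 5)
Your outline follows the same overall route as the paper's own discussion (transcribe the connected-case machinery of \autoref{thm:main} component by component, with $\beta_2\circ\beta_1$ replacing $\beta\circ\beta$, which correctly removes the parity restriction on $l$). However, you misidentify the crux, and your claim that ``the resulting set of systems is open'' contradicts both the statement of \autoref{conj1} and the reason it is only a conjecture. The genuine obstacle is not an extension of the Shape/Monotonicity Lemmas or of \autoref{thm:P'est-full} to two rotation zones --- those statements are essentially local to one jump point and one rotation phase and apply to each component separately. The obstacle is that the unconnected construction needs \emph{two distinct grand canards for the same value of $\eps$}: one trajectory that hugs the full unstable branch $M_1^+$ and exits along $M_2^-$ (the reference curve controlling the release from $M_1^+$), and a second, different trajectory hugging the full $M_2^+$ and exiting along $M_1^-$. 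In the connected two-pass case both halves $Q^\pm_\eps$ are controlled by the \emph{same} grand canard, which exists precisely for $\eps$ in one sequence of exponentially short intervals $R_n$. In the unconnected case each of the two grand canards exists on its own sequence of exponentially short intervals, $\mathcal R^1$ and $\mathcal R^2$, defined by the nonemptiness of $D_{1,\eps}^+\cap D_{2,\eps}^-$ and $D_{1,\eps}^-\cap D_{2,\eps}^+$ respectively, and there is no a priori reason for these two sparse families to intersect at all.

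This is exactly why the paper formulates a separate conjecture on the coexistence of two grand canards and why the genericity in \autoref{conj1} is only \emph{topological}: one argues that for any pair $R^1_k$, $R^2_l$ with $k,l$ large a small perturbation (adjusting the fast rotation in one of the gaps) makes them overlap, so the set of systems with $N$ overlaps is open and dense for each $N$, and the set with infinitely many overlaps accumulating at $0$ is residual --- but not open. Your proposal, by speaking of ``the grand canard'' in the singular and of a single sequence $R_n$ obtainable from Shape/Monotonicity analogs, silently assumes away this coexistence problem; everything downstream of it is indeed a routine transcription, but the step you flag as ``the technical heart'' is not the one that blocks the proof.
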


\begin{figure}[htb]
    \begin{center}
        \includegraphics[width=10cm]{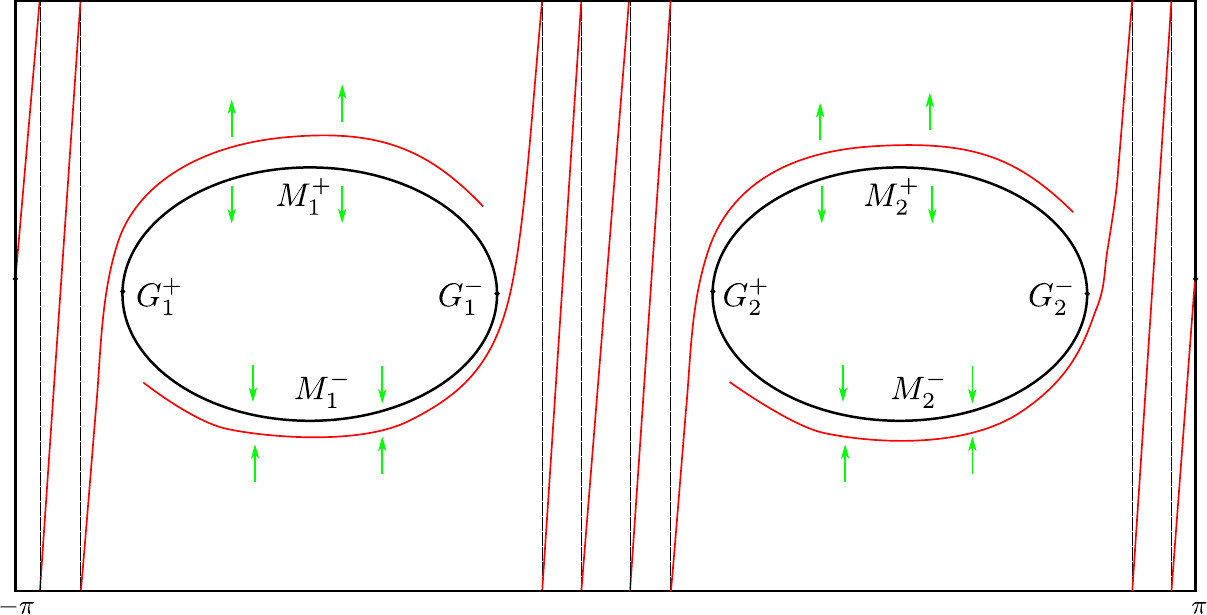}
        {\vskip 0.3cm}
        \includegraphics[width=10cm]{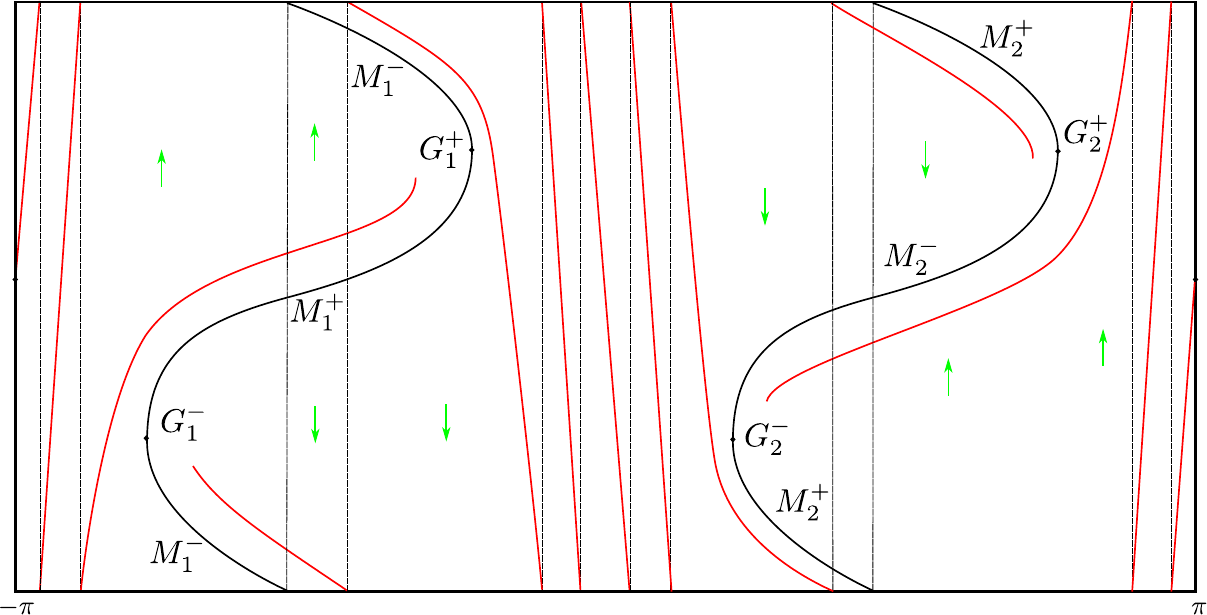}
        \caption{Simple unconnected slow curve: contractible (top) and noncontractivle (bottom) cases}\label{fig:unconnected_slow_curve}
    \end{center}
\end{figure}%

The components of the simple unconnected slow curve may be both contractible or both noncontractible, see \autoref{fig:unconnected_slow_curve}. Consider first the case of contractible ones, see \autoref{fig:unconnected_slow_curve}, top. Note that this figure is very similar to one studied in the present paper (see \autoref{fig:general-view}) but contains two copies of the slow curve. In fact one can obtain such a picture as a two-leaf cover of the original system~\eqref{eq:slow_fast_system}: the fundamental domain should be extended twice along the slow direction. If we obtain the new system from the original one in this way all the results about two-pass limit cycles for the original system can be rewritten in terms of one-pass limit cycles for the new system. This is not a huge success because of strong nongenericity of the new system (shift symmetry). However, it is easy to see that the only requirement that is needed for our proof to work in the new settings is the existence of two grand canards for the same value of $\eps$. For shift-symmetric system we obtain them ``for free'', for generic system it seems to be much rarer event.  Nevertheless, if the generic system possesses two grand canards like it is shown on the Figure, the same arguments we used to prove \autoref{thm:main} can be applied verbatim to the proof of \autoref{conj1}.

We also note that the same arguments work for the case of noncontractible components, again, provided that two grand canards exists. This
can be of particular interest because slow-fast systems with 
noncontractible slow curve (like in \autoref{fig:unconnected_slow_curve}, bottom), 
appear naturally in physical applications~\cite{KRS}.

Thus the key question is: can two grand canards coexist for the same value of
$\eps$ for generic system? We conjecture that for \emph{topologically generic}
systems the answer is affirmative.
\begin{conjecture}
    For \emph{locally topologically generic} slow-fast system on the two-torus there
    exists a sequence of intervals on the ray $\{\eps >0 \}$ accumulating at 0
    such that for every $\eps$ from these intervals there exist two grand
    canards.
\end{conjecture}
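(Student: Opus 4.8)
The plan is to attach to each component of the slow curve a single scalar ``grand canard phase'' function, to reduce the coexistence of two grand canards to a simultaneous Liouville-type approximation statement, and then to settle the latter by a Baire category argument.

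\emph{Setting up the phase functions.} For each component $M_j$, $j=1,2$, apply the Shape and Monotonicity Lemmas of~\cite{GI,IS1} to the analogues $D^{\pm}_{\eps,j}$ of $\Dep,\Dem$. One gets that $D^+_{\eps,j}$ is an arc of length $O(e^{-C_j/\eps})$ which, as $\eps\downarrow 0$, rotates monotonically past the (also rotating) arc $D^-_{\eps,j}$. Consequently there is a smooth, strictly monotone function $h_j\colon(0,\eps_0)\to\mathbb R$, with $h_j(\eps)\to+\infty$ as $\eps\to0$ and an asymptotic expansion $h_j(\eps)=A_j/\eps+o(1/\eps)$, $A_j>0$ (the leading coefficient being an integral of the type $\Phi^{\pm}$ over the branches of $M_j$, in the spirit of \autoref{thm:P'est} and \autoref{thm:P'est-full}), such that $M_j$ carries a grand canard at the parameter $\eps$ if and only if $\operatorname{dist}(h_j(\eps),\mathbb Z)<\mu_j(\eps)$, where $\mu_j(\eps)=O(e^{-c_j/\eps})$ encodes the length of $D^+_{\eps,j}$. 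Let $\eps_n$ denote the solution of $h_1(\eps)=n$, so $\eps_n=A_1/n+o(1/n)$ and the interval $R^{(1)}_n$ where $M_1$ has a grand canard contains $\eps_n$ and has length $O(e^{-c_1 n})$.

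\emph{Reduction to a Liouville condition.} The key elementary observation is that $h_2$ is almost constant on the exponentially short interval $R^{(1)}_n$: there $|h_2'(\eps)|=O(1/\eps^2)=O(n^2)$, so the variation of $h_2$ over $R^{(1)}_n$ is $O(n^2e^{-c_1 n})\to0$. Hence $M_1$ and $M_2$ carry grand canards for a \emph{common} $\eps$ near $\eps_n$ as soon as $\operatorname{dist}(h_2(\eps_n),\mathbb Z)<\rho_n$, where $\rho_n\asymp e^{-cn}$ (up to a polynomial factor) is the threshold obtained by combining the window $\mu_2$ with that variation. Using the expansion of $h_2$, the inequality $\operatorname{dist}(h_2(\eps_n),\mathbb Z)<\rho_n$ for infinitely many $n$ is a Liouville-type condition on the asymptotic data of the system (essentially on $A_2/A_1$ and its subleading corrections): it fails on a set of full measure, but it holds on a residual set, which is exactly why the statement is phrased for \emph{topologically} generic systems.

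\emph{Baire category argument.} For $N\in\mathbb N$ let $\mathcal U_N$ be the set of admissible systems (slow curve simple unconnected and nondegenerate) for which there exist $n\ge N$ and $\eps\in R^{(1)}_n$ carrying grand canards on both components. Since $h_j$, $\eps_n$, $\mu_j$, $\rho_n$ depend continuously on the vector field (by \autoref{thm:P'est}, \autoref{thm:P'est-full} and the Shape/Monotonicity Lemmas), and ``simple unconnected nondegenerate'' is an open condition, each set $\{\operatorname{dist}(h_2(\eps_n),\mathbb Z)<\rho_n\}$ is open, so $\mathcal U_N$ is open. It is also dense: given any admissible system, apply a small $C^{\infty}$ perturbation of $f$ supported near one branch of $M_2$ only; this leaves $M_1$, hence $h_1$, $\eps_n$ and $R^{(1)}_n$, untouched, while it changes $A_2$ by a prescribed small $\delta$, so it moves $h_2(\eps_n)=A_2 n/A_1+\cdots$ by $\approx \delta n/A_1\pmod1$. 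Choosing $n\ge N$ with $n>A_1/\delta_0$, an arbitrarily small $\delta$ sweeps $h_2(\eps_n)\bmod1$ around the whole circle, so one can make $\operatorname{dist}(h_2(\eps_n),\mathbb Z)=0<\rho_n$, landing in $\mathcal U_N$ (a small enough perturbation preserves the open hypotheses). Then $\bigcap_N\mathcal U_N$ is residual, and every system in it has two grand canards for every $\eps$ in a sequence of parameter intervals accumulating at $0$, which is the conjecture.

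\emph{Main obstacle.} I expect the hard part to be the first step together with the continuity bookkeeping in the third: one must make the phase functions $h_j$, the windows $\mu_j$, the rotation counts and the return times genuinely precise and uniform --- in particular establish the asymptotic expansion of $h_j$ with controlled error so that the Liouville condition is meaningful --- and one must verify that the localized perturbation near $M_2$ really shifts $h_2(\eps_n)$ by the claimed amount with the grand canard structure of $M_1$ frozen. This amounts to an effective, system-dependent refinement of the Shape and Monotonicity Lemmas; once it is in place, the Diophantine and Baire-category part is routine.
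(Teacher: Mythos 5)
Your proposal follows essentially the same route as the paper's own (heuristic, outline-level) argument: encode each of the two grand-canard families as a sequence of exponentially short parameter intervals accumulating at $0$, observe that coexistence of two grand canards means the two sequences intersect, and run a Baire-category argument (open and dense for each finite number of coincidences, hence residual for infinitely many); your phase functions $h_j$ and the Liouville-type condition are just a more quantitative bookkeeping of the same mechanism. One caveat on your density step: in the unconnected setting each grand canard traverses \emph{both} components (it follows the unstable branch of one and the stable branch of the other), so the condition defining the first family $R^{(1)}_n$ also involves the dynamics near $M_2$, and a perturbation supported near a branch of $M_2$ does not in general leave $h_1$, $\eps_n$ and $R^{(1)}_n$ untouched as you claim --- the paper avoids this by perturbing the fast component only in the rotation phase of one of the two gaps, which shifts one family of intervals relative to the other.
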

\begin{proof}[Outline of the proof]
Each component $M_{1}$, $M_{2}$ has repelling and attracting parts. Denote them by $M_{1}^{\pm}$ and $M_{2}^{\pm}$ respectively.
Take $\alpha_1^{\pm}$ close to $y(G_1^{\pm})$, $\alpha_1^{\pm}\in \left(y(G_1^+),y(G_1^-)\right)$ and  
$\alpha_2^{\pm}$ close to $y(G_2^{\pm})$, $\alpha_2^{\pm}\in \left(y(G_2^+),y(G_2^-)\right)$. 
As before (see \autoref{section:notation}) consider corresponding vertical transversal segments $J_{1}^{\pm}$, $J_{2}^{\pm}$ with the following properties: for $i=1,2$ segment $J_i^{\pm}$ intersects $M_i^{\pm}$ respectively and does not intersect $M_i^{\mp}$; moreover,
$$
	y(J_i^{\pm}) = \alpha_i^{\pm},~i=1,2.
$$
Denote $y_{1} = (y(G_1^-) + y(G_2^+))/2$, 
$y_{2} = (y(G_2^-) + y(G_1^+))/2$ and consider
 vertical global transversal circles 
$y_1 \times \mathbb{S}^1$ and 
$y_2 \times \mathbb{S}^1$.
As before (see \autoref{section:preliminary}) denote
$$
	D_{1,\varepsilon}^+ = P_{\varepsilon}^{\rarc{\alpha_1^+}{y_1}} J_1^+,~
	D_{1,\varepsilon}^- = P_{\varepsilon}^{\larc{y_2}{\alpha_1^-}} J_1^-,	
$$
$$
	D_{2,\varepsilon}^+ = P_{\varepsilon}^{\rarc{\alpha_2^+}{y_2}} J_2^+,~
	D_{2,\varepsilon}^- = P_{\varepsilon}^{\larc{y_1}{\alpha_2^-}} J_2^-.
$$
If intersection $D_{1,\varepsilon}^+\cap D_{2,\varepsilon}^-$ is not empty, then there exists a grand canard that intersects both $J_1^+$ and $J_2^-$. The same holds for the intersection $D_{1,\varepsilon}^-\cap D_{2,\varepsilon}^+$ and a grand canard that intersects both $J_1^-$ and $J_2^+$.

Further, these intersections forms a series of segments accumulating at zero.
$$
\mathcal R^1=\{R_n^1\} _{n=0}^{\infty},\quad \mathcal R^2=	\{R_n^2\}_{n=0}^{\infty}
$$
$$
\bigcup_{n=0}^\infty R_n^1=\{\varepsilon\mid D_{1,\varepsilon}^+\cap D_{2,\varepsilon}^- \ne \emptyset\},\quad
\bigcup_{n=0}^\infty R_n^2= \{\varepsilon\mid D_{1,\varepsilon}^-\cap D_{2,\varepsilon}^+ \ne \emptyset\}.
$$
\begin{definition}
System \eqref{eq:slow_fast_system} is called \emph{good} if for any $\varepsilon>0$ there exists $n,m\in \mathbb{N}$ such that
$R_n^1\cap R_m^2 \ne \emptyset$ and $R_n^1,~R_m^2\subset (0,\varepsilon)$. 
\end{definition}
We conjecture that \emph{good} systems are topologically generic in the space of slow-fast systems\footnote{The authors are grateful to Victor
        Kleptsyn for these arguments.}. Indeed, for a couple of intervals $R_k^1$ and $R_l^2$ one can perturb system slightly (increasing or decreasing the fast component of the vector field in the ``rotation phase'') to make them intersect each other by a segment. If $k$ and $l$ are large enough the perturbation is small enough. Therefore, the set of systems with one intersection between $\mathcal R^1$ and $\mathcal R^2$ is open and dense. In a similar way one can show that the set of systems with any finite number of intersections between these two series of segments is open and dense. It follows that the set of systems with \emph{infinitely many} intesections is residual and the corresponding property is topologically generic.
\end{proof}

\paragraph{Acknowledgements.} The authors are grateful to John Guckenheimer, Victor Kleptsyn, Alexey Klimenko and Alexey Okunev for fruitful discussions and valuable comments. The authors are especially grateful to Yulij Ilyashenko for his interest in the work and the suggestions on the text of the paper that drastically simplified the proof.

\end{document}